\documentclass[a4paper,fleqn]{cas-sc}

\usepackage[numbers,sort]{natbib}
 \usepackage{longtable}
\usepackage{float}
\def\tsc#1{\csdef{#1}{\textsc{\lowercase{#1}}\xspace}}
\DeclareMathOperator*{\fl}{fl}
\DeclareMathOperator*{\MIXEDDjD}{MAD(j)}
\DeclareMathOperator*{\MIXEDSINGLE}{MAS(j)}
\tsc{WGM}
\tsc{QE}
\tsc{EP}
\tsc{PMS}
\tsc{BEC}
\tsc{DE}
 \newtheorem{theorem}{Theorem}
 
 \newdefinition{rmk}{Remark}
 \newproof{pf}{Proof}
 \newproof{pot}{Proof of Theorem \ref{thm2}}

\begin{document}
\let\WriteBookmarks\relax
\def\floatpagepagefraction{1}
\def\textpagefraction{.001}
\def\dt{h}
\shorttitle{Mixed precision numerical methods for ODE}
\shortauthors{M. Al Sayed Ali et~al.}
%\begin{frontmatter}

\title [mode = title]{Mixed precision  explicit numerical methods for ordinary differential equations}
\tnotemark[1]

\author[1]{M. {Al Sayed Ali}}[type=editor,
                        auid=,bioid=,
                        prefix=,
                        role=,
                        ]
\ead{mouhamad.alsayedali@univ-rennes.fr}
\affiliation[1]{organization={IRMAR, Univ. Rennes},
               % addressline={},
                city={Rennes},
%               citysep={}, % Uncomment if no comma needed between city and postcode
                postcode={35000},
                country={France}}

\author[2]{S. Bernard}[type=editor,
                        auid=,bioid=,
                        prefix=,
                        role=,
                        ]
\ead{samuel.bernard@inria.fr}
\affiliation[2]{organization={Universit\'e Claude Bernard Lyon 1, CNRS, École Centrale de Lyon, INSA Lyon, Universit\'e Jean Monnet, ICJ UMR5208, Inria},
               % addressline={},
                city={Villeurbanne},
%               citysep={}, % Uncomment if no comma needed between city and postcode
                postcode={69622},
                country={France}}

\author[3]{A. Marzorati}[type=editor,
                        auid=,bioid=,
                        prefix=,
                        role=,
			]
\ead{arsene.marzorati@inria.fr}
\affiliation[3]{organization={Inria},
               % addressline={},
                city={Lyon},
%               citysep={}, % Uncomment if no comma needed between city and postcode
                country={France}}

\author[4]{J. Rouzaud-Cornabas}[type=editor,
                        auid=,bioid=,
                        prefix=,
                        role=,
                        ]
\ead{jonathan.rouzaud-cornabas@inria.fr}
\affiliation[4]{organization={CITI, INSA Lyon, CNRS, Inria, LIRIS UMR5205,  Universit\'e Claude Bernard Lyon 1, ECL, Universit\'e Lumi\`ere Lyon 2},
               % addressline={},
                city={Lyon},
%               citysep={}, % Uncomment if no comma needed between city and postcode
                country={France}}

\cortext[cor1]{Corresponding author}

% REQUIRED
\begin{abstract}
Our objective is to solve large systems of ordinary differential equations (ODEs) commonly used to model biological processes. These equations are typically nonlinear, complex, and high-dimensional. In computational biology, such ODEs are generally solved using numerical methods. In this work, we focus on explicit numerical methods because of their flexibility. However, their limited stability regions may result in high computational costs. To mitigate this issue, we investigate mixed precision algorithms designed to reduce computational effort by performing selected parts of the numerical method in lower arithmetic precision. We develop several mixed precision explicit methods and assess their performance on two large-scale biological benchmark ODE models. Our theoretical analysis highlights the effectiveness of partially reducing arithmetic precision within explicit methods. Numerical experiments demonstrate that our mixed methods—implemented in both sequential and parallel versions using MPI—combining single (float) and double precision arithmetic can achieve up to twice the speed of a fully double precision implementation while preserving the same level of accuracy. Furthermore, the results indicate that decreasing the timestep improves the performance and robustness of our mixed methods, while the single precision method fails to converge.
\end{abstract}

% REQUIRED
\begin{keywords}
Arithmetic precision\sep Mixed precision\sep ODE \sep Explicit numerical methods\sep Sequential and parallel computing
% 65L06, 65Y05, 65Y20
\end{keywords}

\maketitle

\section{Introduction}

 On modern architectures, the performance of single precision (float, usually occupying $32$ bits in memory) operations is often at least twice as fast as the performance of double precision ($64$ bits) operations (\cite{AA2021, HM2022}). Because single precision is limited in accuracy, double precision has become the {\it de facto} standard in scientific computing. Double precision algorithms are less susceptible to numerical instabilities at the expense of higher computational cost. Lowering the arithmetic precision could speed up computations and communications without compromising accuracy.  Lower precision ({\it e.g}., single precision) algorithms may be employed; however, this typically comes at the cost of reduced accuracy or may even lead to numerical divergence due to stability issues. Mixed precision algorithms, which combine lower and higher arithmetic precisions, could therefore be used to increase performances while maintaining
high accuracy. Mixed precision algorithms have become popular in numerical linear algebra \cite{AA2021,ALSGK2019,ZFZI2022}, machine learning \cite{DMMK2018,MSDK2019}, climate and weather model simulation \cite{ADP2022,KDP2020,KDP2021,PCKSP2022,VDLSC2017,DSDP2017} and for numerical integration \cite{BGGH2021,G2022,HMR2008}.

 In this paper, we are interested in using mixed precision in explicit numerical methods for solving large systems of ordinary differential equations (ODEs). Our approach work for all size of ODE, but we are mainly interested in the ODEs obtained from biological models that are of large size (see \cite{EBE2014, EBE2017, LB2015}).  To our knowledge, few authors \cite{CS2022} have studied mixed precision in an explicit numerical method.  All other authors  \cite{K2022,BGGH2021,G2022,BRG2023}
have used mixed precision in implicit numerical methods. In \cite{CS2022}, the authors analyzed the accuracy and the stability of a designed mixed precision explicit Runge-Kutta-Chebyshev (RKC).  They showed that they can preserve the order $p$ of RKC by $p$ higher precision evaluations of the right-hand side (RHS) of the ODE at each integration step. Therefore, this approach can only be applied when the number of stages is strictly greater than the order of the method. Otherwise, we have to compute all the numerical method with a higher precision.

In our previous work~\cite{ABMC2025}, we investigated the use of mixed precision arithmetic within an implicit numerical method to accelerate the computation of the solution of large-scale ordinary differential equations (ODEs). As each iteration of an implicit numerical method involves the solution of a large nonlinear system—typically handled via Newton's method—the algorithm presents multiple opportunities to employ reduced precision in selected components to improve computational efficiency. In particular, at every time step, Newton's method requires the solution of a linear system. The computational expense of these solves, together with their limited parallel scalability, can become a performance bottleneck, even in light of the superior stability properties characteristic of implicit methods.

Explicit numerical methods, particularly when implemented with MPI, generally exhibit better scalability than implicit methods, as they primarily rely on local computations and require relatively limited communication. By avoiding the computation of the solution of linear and nonlinear systems, they significantly reduce synchronization and communication overhead. As a result, the opportunities for applying reduced precision are more constrained and are mainly associated with the evaluation of the method's stages, that is, the computation of the ODE right-hand side.

In this paper, we show that, despite these limitations, it is still possible to accelerate the computation of explicit numerical methods by selectively lowering the arithmetic precision in some computations of the stages of these methods.

This mixed precision strategy reduces computational time and memory usage while controlling the loss of numerical accuracy. Numerical experiments show that the proposed mixed precision methods, whether implemented in sequential or parallel environments, achieve speedups of up to a factor of $2.5$ compared with full double precision implementations, while maintaining comparable accuracy. Although fully single precision implementations can also yield speedups of up to a factor of $2.5$, they generally fail to deliver sufficient accuracy.

Finally, we show both theoretically and numerically that, as the time step tends to zero, the mixed precision solution converges to the double precision solution, whereas the single precision solution diverges from it.

%In \cite{LWFY2011}, they constructed a parallel four-order explicit Runge-Kutta (RK) method, that has high accuracy and computational efficiency. They computed an approximation of the stages $k_l$  %from those computed at the previous iterations. The new stages $k_l$ become independent and so they can be computed in parallel. This approach is only efficient for small sizes of ordinary %differential equations, and the theoretical results are only valid
%for  a $1-D$ ordinary differential equation.

This paper is organized as follows: In Section \ref{sec:Exp}, we define our mixed precision  methods, then we provide the necessary materials, including numerical methods and their mixed versions, as well as two biological models. In Section \ref{Sec:results}, we present theoretical and numerical results demonstrating the efficiency of our mixed approaches. Finally, we give some conclusions in Section \ref{Sec:Con}.

 \section{Materials and Methods}\label{sec:Exp}

Consider the  system of  ODEs
\begin{equation}\label{eqo1}
\dot{y}(t) =f(t,y(t)), \quad  t_0 \leq t \leq T, \quad  y(t_0) =
y_{0},
\end{equation}

where
%$f:[t_0,T]\times \R^{n}\longrightarrow \R^{n}$ is a smooth function
$y(t) \in  \mathbb{R}^{n}$.

Most of these systems of ODE arise either from the spatial discretization of partial differential equations—using finite difference, finite element, or finite volume methods—or from the direct modeling of physical and biological phenomena.

 A class of explicit numerical methods for solving  (\ref{eqo1}) is given by
 \begin{equation}\label{E_S1}
  y_{i+1}=y_{i}+h\displaystyle \sum_{l=1}^{q}a_{l}k_{l,i},\ \ i=0,\ldots,N-1
\end{equation}
where $k_{l,i}=f(t_{_{l,i}},y_{_{l,i}})$, $y_{i}$ is an approximation to $y(t_{i})$  with
$t_{i}=t_{0}+ih,\ h=\frac{T-t_{0}}{N},\ t_{_{l,i}} \in [t_{0},T], y_{l,i}=F(y_{i-q},\ldots,y_{i},k_{1,i},\ldots,k_{l-1,i})$ depends on the previous iterations $y_i$, the previous stages $k_{j,i},\ j=1,\ldots,l-1$, and the function $f.$  Here, we assume that the timestep $h$ is constant, but the theory of our mixed methods remains working for variable timestep. We also assume that $y_{l,i}$ can be expressed as follows:
\begin{equation}\label{E_y_li}
y_{l,i}=\sum_{j=0}^{q} \alpha_{l,j} y_{i-j}+h\sum_{j=1}^{l-1} \beta_{l,j} k_{j,i}
\end{equation}
where $\alpha_{l,j}, \beta_{l,j}$ are constants. Additionally, we assume that the numerical method  $(y_i)$ is of order $p$, such that $\|y_i-y(t_i)\|\le C_{y} \dt^p,$
where $C_y$ is a constant.

 In the next section, we will see that most standard explicit methods such as Runge-Kutta, Adams-Bashforth can be written as in (\ref{E_S1}) that satisfy (\ref{E_y_li}).

Each iteration of the method described in equation (\ref{E_S1}) requires $q$ evaluations of the function $f$. In our models, each evaluation requires $\mathcal{O}(n^2)$ operations which is too expensive since $n$ is too large. In order to reduce the computational cost, we compute some of these evaluations in a lower precision. With this approach of lowering precision, the numerical methods will be called mixed methods. The formulas of these mixed methods are  defined in the following section.
\subsection{Mixed precision explicit numerical methods}\label{Sec:tests}
  %  In this section, we give the algorithmic aspect of our mixed methods (\ref{E_S1_L}), the numerical methods and two biological  ODE models that will be used in section \ref{Sec:results} to numerically show the efficiency of our mixed methods.
  The construction of the mixed methods given in this section, works with any two precisions (lower and higher). In order to simplify the notation, we will use single precision and double precision. But any couples of precision would have worked.
   To define our mixed methods, we introduce the function $\text{fl}(y)$ as the evaluation of $y$, in a single precision, with a tolerance of $\varepsilon$. It is assumed that this function follows the error model $\text{fl}(y) = (I + \delta)y$, where $I$ is the identity mapping and $\|\delta\| \leq \varepsilon$ (that means $\|\text{fl}(y)-y\| \le \varepsilon\|y\|$). Here, $\varepsilon$ represents the machine epsilon, which depends on the precision being used, and $\|\cdot\|$ denotes the Euclidean norm on $\mathbb{R}^n$. As we are using single precision,
   the machine epsilon is approximately $10^{-7}.$

Our mixed methods are defined, using the function $\text{fl}$, by performing some stages $k_{l_{m},i},\ m\in \{1,\ldots,r\},\ r\le q$, with a single precision. Therefore, the numerical method $y_i$ will be replaced
by the so-called mixed method $\tilde{y}_i$ that is given by
   \begin{equation}\label{E_S1_L}
 \tilde{y}_{i+1}=w_i+h\displaystyle \sum_{\underset {{l\notin \{l_{1},\ldots,l_{r}\} }}{l=1}}^{q}a_{l}k_{l,i}+h \displaystyle \sum_{\underset {{l\in \{l_{1},\ldots,l_{r}\} }}{l=1}}^{q}a_{l}\tilde{k}_{l,i}, \quad \tilde{y}_{0}=y_0
\end{equation}
where, $w_i$ is either $\tilde{y}_i$ or $\text{fl}(\tilde{y}_i)$, and for $\ l\in \{l_1,\ldots,l_r\},$ we have
 \begin{eqnarray*}
 {\tilde{k}_{l,i}}=f^{(L)}\left(t_{_{l,i}},\displaystyle \sum_{j=0}^{q} \alpha_{l,j} \tilde{y}_{i-j}+h\sum_{j=1}^{l-1} \beta_{l,j} \tilde{k}_{j,i}\right),\end{eqnarray*}
 with $f^{(L)}(t,x):=\fl(f(\fl(t),\fl(x)))$ is obtained by implementing the function $f$ directly in single precision, rather than by simply casting double precision results to single precision. All remaining operations in (\ref{E_S1_L}) will be performed in double precision. The mixed numerical methods (\ref{E_S1_L}) will be denoted as {\it P-A$_{1}$A$_{2}$ $\ldots$ A$_{q}$}, where   $P$ can be either {\it S} or {\it D}, $A_{l}={\it S}$  when $l\in \{l_{1},\ldots,l_{r}\}$ and $A_{l}={\it D}$ otherwise. When  {\it P} $=$ {\it D} we set $w_i=\tilde{y}_i$,
 and when  {\it P} $=$ {\it S}, we set $w_i=\fl(\tilde{y}_i).$
 Therefore, when {\it P} $=$ {\it D}  the Mixed method
 is Accumulated in Double precision (MAD), and when  {\it P} $=$ {\it S} the Mixed method is Accumulated in Single precision (MAS).

When  ${\it P} = A_{k}={\it D}, k=1,\ldots,q,$ the mixed method is the  DOUBLE method. If all operations in the numerical method are performed in single precision then it will be denoted by SINGLE. We denote by MAD(j), $j\in \mathbb{N}, j\le q$, the set of mixed methods of MAD, where only $j$ stages are computed in double precision ($r=q-j$), and the remaining stages are computed in single precision. For $j=q$ we get the DOUBLE method.

We denote by MAS(j), $j\in \mathbb{N}, j\le q$, the set of mixed methods  MAS where only $j$ stages are computed in double precision ($r=q-j$), and the remaining stages are computed in single precision. In the numerical method we will only use MAS(q), which means that all stages in (\ref{E_S1_L}) are performed in double precision, and  $w_i$ is computed in a single precision. This method requires slightly more computational time than the DOUBLE method, as it performs the same operations while additionally incurring the cost of casting $w_i$ to single precision. The purpose of introducing the mixed method MAS(q) is to demonstrate the importance of evaluating the first term $w_i$ in double precision.

The numerical methods we will use in our numerical tests are :  Runge-Kutta $2$ (RK2), Runge-Kutta $4$ (RK4), Adams-Bashforth of order $1$ (AB1), and Adams-Bashforth of order $2$ (AB2). See  \cite{B2003,B1996,HNW87,HW96,VS97} for more details about these methods.
 For the Runge–Kutta methods, each time step requires multiple evaluations of the right-hand side (RHS) function: RK2 and RK4 involve $2$ and $4$ RHS evaluations per step, respectively, due to the strong interdependence of their intermediate stages. In contrast, the Adams–Bashforth methods require only a single RHS evaluation per time step, since they are explicit multistep methods that reuse information from previous steps rather than computing multiple dependent stages within the same step. These methods are written as in (\ref{E_S1}), where
\begin{eqnarray*}
q=2,~~ a_1=a_2=\frac{1}{2},~~
t_{1,i} = t_{i},~~y_{1,i}={y}_{i} , ~~
t_{2,i}=t_{i+1},~~y_{2,i}={y}_{i}+h k_{1,i},
\end{eqnarray*}
 for Runge-Kutta 2,
\begin{eqnarray*}
q=4,~~ a_1=a_4=\frac{1}{6},~~ a_2=a_3=\frac{2}{6},\nonumber\\
t_{1,i} = t_{i},~~y_{1,i}={y}_{i}, ~~ t_{2,i}=t_{i}+ h/2,~~ y_{2,i}={y}_{i}+h k_{1,i}/2\nonumber\\
t_{3,i} = t_{i}+ h/2,~~ y_{3,i}={y}_{i}+h k_{2,i}/2, ~~ t_{4,i}=t_{i+1}, ~~ y_{4,i}={y}_{i}+h k_{3,i}\nonumber
\end{eqnarray*}
 for Runge-Kutta 4,
 \begin{eqnarray*}
q=1,~~ a_1=1,~~t_{1,i} = t_{i},~~y_{1,i}={y}_{i},
\end{eqnarray*}
 for  Adams-Bashforth of order $1$,
 \begin{eqnarray*}
q=2,~~ a_1&=&\frac{3}{2},~~ a_2=-\frac{1}{2},\nonumber\\
t_{1,i} &=& t_{i},~~ y_{1,i}={y}_{i},~~ t_{2,i} = t_{i-1},y_{2,i}={y}_{i-1},
\end{eqnarray*}
 for  Adams-Bashforth of order $2$.

The final solution $y_N$ and the runtime (in seconds) of the mixed precision numerical method are denoted by $y_M$ and $T_M$, respectively. The final solution $y_N$ and the runtime of the SINGLE method (respectively, the DOUBLE method) are denoted by $y_S$ and $T_S$ (respectively, $y_D$ and $T_D$).

The runtime speedup of the SINGLE (or MIXED) method, measured relative to the runtime of the DOUBLE method, is defined as
$
\frac{T_D}{T_S}
\quad \text{(respectively, } \frac{T_D}{T_M} \text{)}.
$

\subsection{Benchmark models}\label{Sec:benchmarks}
In this section, we present two benchmark models that we will use to numerically show the efficiency of the mixed methods MAD(j), $0\le j\le q.$
\subsubsection{Benchmark model $1$}

For the first benchmark, we consider  a mathematical model for the regulation of the cell cycle by the circadian clock \cite{EBE2017}.  This ODE
 is of the form $(\ref{eqo1})$, defined on $[0,120]$, where  $ y=[y^{(1)},\ldots,y^{(d)}]^{T}, y^{(i)}=[y_{1}^{(i)},\ldots,y_{10}^{(i)}]$, $n=10d$,  and $y^{(i)}$ satisfies, the following system $(S_i)$, for $i=1,\ldots,d$
\begin{eqnarray}
% \nonumber % Remove numbering (before each equation)
\frac{dy_1^{(i)}}{dt} &=& 1/\tau \left ( \nu_{1b}(y_7^{(i)}+{\bf \Psi^{(i)}})/(k_{1b}(1+(y_3^{(i)}/k_{1i})^{p_0})+y_7^{(i)}+{\bf \Psi^{(i)}}) - k_{1d}y_1^{(i)} \right )\nonumber \\
\frac{dy_2^{(i)}}{dt} &=& 1/\tau \left ( k_{2b}{(y_1^{(i)})}^{q}-k_{2d}y_2^{(i)}-k_{2t}y_2^{(i)}+k_{3t}y_3^{(i)}  \right )\nonumber\\
\frac{dy_3^{(i)}}{dt} &=& 1/\tau \left  ( k_{2t}y_2^{(i)} - k_{3t}y_3^{(i)} - k_{3d}y_3^{(i)}  \right )\nonumber\\
\frac{dy_4^{(i)}}{dt} & =& 1/\tau \left ( \nu_{4b}(y_3^{r_0})^{(i)}/(k_{4b}^{r_0} +(y_3^{(i)})^{r_0}) - k_{4d}y_4^{(i)}  \right )\nonumber\\
\frac{dy_5^{(i)}}{dt} &=& 1/\tau\left ( k_{5b}y_4^{(i)} - k_{5d}y_5^{(i)} - k_{5t}y_5^{(i)} + k_{6t}y_6^{(i)}  \right )\nonumber\\
\frac{dy_6^{(i)}}{dt} &=& 1/\tau \left ( k_{5t}y_5^{(i)} - k_{6t}y_6^{(i)} - k_{6d}y_6^{(i)} + k_{7a}y_7^{(i)} - k_{6a}y_6^{(i)}  \right )\nonumber\\
\frac{dy_7^{(i)}}{dt} &=& 1/\tau \left  ( k_{6a}y_6^{(i)} - k_{7a}y_7^{(i)} - k_{7d}y_7^{(i)}  \right )\nonumber\\
\frac{dy_8^{(i)}}{dt} &=&  \lambda \left  ( (k_{_{impf}}+k_{_{0mpf}}\exp(-\eta d))k_{_{1mpf}}^{n_0}/( k_{_{1mpf}}^{n_0} + (y_8^{(i)})^{n_0} + s (y_{10}^{(i)})^{n_0} ) (1-y_8^{(i)}) - d_{wee1}y_9^{(i)}y_8^{(i)}  \right )\nonumber\\
\frac{dy_9^{(i)}}{dt} &=& \lambda \left ( \frac{k_{_{actw}}}{k_{_{actw}}+d_{w1}}(c_w+C(y_7 - b_{bmal0}) + b_{bmal0} ) +... \right .\nonumber\\
        & & \left . (\frac{k_{_{actw}}}{k_{_{actw}}+d_{w1}}-1)k_{inactw}{(y_8^{(i)})^{n_0}}y_9^{(i)}/(k_{1wee1}^{n_0}+{(y_8^{(i)})^{n_0}})  - d_{w2}y_9^{(i)} \right )\nonumber\\
\frac{dy_{10}^{(i)}}{dt} &=& \lambda  \left  ( k_{_{act}}(y_8^{(i)}-y_{10}^{(i)})  \right ).\nonumber
\end{eqnarray}
The systems $(S_i)$, $i=1,\ldots,d,$ are strongly coupled with each other via the variable ${\bf \Psi^{(i)}}$, that is given by
$$
\begin{aligned}
{\bf \Psi^{(i)}}&=\frac{k_{s}}{d}\displaystyle \sum_{j=1}^{d}\arctan(y_2^{(j)}-y_2^{(i)})+\frac{\pi}{2}k_s,\ i=1,\ldots,d.
\end{aligned}
$$
The constant parameters used in this model, along with the initial solution $y_0$, are provided in Appendix $1$. The size $n$ of this ODE, will be defined in the numerical tests.

\subsubsection{Benchmark model $2$}
For the second benchmark, we consider a neural field model that treats the
cortex as a continuous space and describes the spatiotemporal dynamics of the neural activity given long range interactions \cite{A1977,WC1972}. The neuronal membrane potential $V(x,t)$
at location $x$ and time $t$ follows the integral-differential equation
\begin{eqnarray}\label{PDE:NFE}
% \nonumber % Remove numbering (before each equation)
  \frac{\partial{V}}{{\partial t}}(x,t) &=&I(x,t)-V(x,t)+\int_{-1}^{1}K(|x-z|)S(V(z,t)) dz,\qquad x\in [-1,1], t\in [0,1],
\end{eqnarray}
where $I(x,t)$ is an external source of stimulus; $S(V)$ is the dependence between the firing rate of the neuron and the membrane potential and $K(|x-z|)$ is  the connectivity between neurons at locations $x$ and $z$.

The integral term can be discretized. Let  $x_i=-1+i\Delta x, i = 0,\ldots, d,$ be a uniform grid of the interval $[-1,1].$ Then
$$
\int_{-1}^{1}K(|x-z|)S(V(z,t)) dz=\displaystyle \sum_{i=0}^{d-1}\int_{x_{i}}^{x_{i+1}}K(|x-z|)S(V(z,t)) dz.
$$
In each subinterval $[x_i,x_{i+1}],$ we introduce $k_{ \ell}$
Gaussian nodes: $x_{i,s}=x_{i}+\frac{\Delta x}{2}(1+\tau_{s}), s = 1, ..., k_{ \ell}$, where $\tau_{s}$ are the
roots, sorted in an increasing order, of the $k_{\ell}$-th degree Legendre polynomial.  Using the composite left rectangle method to evaluate the integral $\int_{x_{i}}^{x_{i+1}}K(|x-z|)S(V(z,t)) dz$  with the points
 $\{x_{i,s}\}_{1\le s \le k_{\ell}}$, and then evaluating (\ref{PDE:NFE}) at these points, we get a  system of ODEs of the form $(\ref{eqo1}),$ where $n=d\times k_{\ell}, y(t)=[y_1(t),\ldots,y_{d}(t)]^{T}$ with $$y_{i}(t)=[y_1^{(i)}(t),\ldots,y_{k_{\ell}}^{(i)}(t)]\approx[V(x_{i,1},t),\ldots,V(x_{i,k_{ \ell}},t)]\in \mathbb{R}^{k_{\ell}}.$$
The parameters used in our numerical tests are
  $K(x)=e^{-x^2}, S(x)=I(x,t)=\tanh(x)$, and $k_{\ell}=10.$
The initial solution $y_0$ is given by $[y_{1}^{(0)},\ldots,y_{n}^{(0)}]^{T}$ where $y_{i}^{(0)}=\exp(6(i-0.5n)/n),i=1,\ldots,n.$ The size $n$ of this ODE, will be defined in the numerical tests.

\subsection{Computational environment}
All the numerical tests were launched on the computer cluster Gros on Grid5000\footnote{\url{https://www.grid5000.fr/}} (Intel Xeon Gold 5220, 2.20GHz) on Ethernet (25G) a network. The DOUBLE, SINGLE, MAS(j) and MAD(j)  numerical methods, given in section \ref{Sec:tests}, were implemented in Fortran in parallel with MPI and compiled on GCC 10.4.0 with the optimization option O3. Each evaluation of the function $f(t,y)$ is parallelized using MPI (version 4.1.5), and the numerical tests were launched with  $14$ hosts ($252$ processors).
Some numerical tests, when the size of the ODE is small, will be launched in sequential to show that the parallel computing with MPI does not impact the quality of our mixed methods.  For each numerical test, we ran $4$ technical replicates of the simulations. Runtimes (in seconds) were
averaged. The relative error was computed as the infinite norm of coefficient-wise relative errors $\left \|\frac{y_{_{D}}-y_{_{T}}}{y_{_{D}}}\right \|_{\infty},$ where  $\frac{\mbox{ \ \ \ \    }}{\mbox{    }}$ is the pointwise division of two vectors,  with $y_{_{T}}\in \{y_{_{S}},y_{_{M}}\}$.

\section{Results}\label{Sec:results}
This section shows theoretically and numerically  the efficiency of our mixed methods MAD(j), $j\le q.$

\subsection{Theoretical results}
The following theorem shows that our mixed methods $\mathrm{MAD}(j)$, for $j \le q$, provide a very good approximation of the double solution, unlike the SINGLE and MAS methods. At the same time, MAD(j) accelerates computations without compromising the integrity of the numerical method (\ref{E_S1}).
   \begin{theorem}\label{Theorem 1}

Let us assume that the function $f(t,y)$ is Lipschitz with Lipschitz's constant $L.$ We have the following results:

   \begin{enumerate}
   \item   Let $\tilde{y}_i$ be the solution computed by  the method $\mathrm{MAD}(j_0)$, ${j_0}\le q.$ Then we have :
    \begin{eqnarray*}
 \displaystyle\max_{0\le i \le N}\|{y}_{_{i}}-\tilde{y}_{_{i}}\| \le c_{1,j_0} \varepsilon+c_{2,j_0} \varepsilon \dt^{p}
 \end{eqnarray*}
 where $c_{1,j_0},c_{2,j_0}$ are real constants and $p$ is the order of the numerical method.
 \item  Let $\tilde{y}_i$ be the solution computed by  the method $\mathrm{MAS}(j_0)$, ${j_0}\le q.$ Then we have :
  \begin{eqnarray*}
  \displaystyle\max_{0\le i \le N}\|{y}_{_{i}}-\tilde{y}_{_{i}}\| \le c_{1,j_0} \varepsilon+c_{2,j_0} \varepsilon \dt^{p}+c_3 {\frac{\varepsilon}{\dt}}.
 \end{eqnarray*}
  where $c_{1,j_0},c_{2,j_0},  c_3$ are real constants.
    \end{enumerate}
 \end{theorem}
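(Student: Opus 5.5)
We compare the mixed recursion (\ref{E_S1_L}) with the exact-arithmetic recursion (\ref{E_S1}) through the global error $e_i:=\tilde{y}_i-y_i$. We derive a one-step inequality of discrete Gronwall type and sum it over the $N=(T-t_0)/\dt$ steps.

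\emph{Step 1: local perturbation of a single-precision stage.} For $l\in\{l_1,\ldots,l_r\}$ we write $\tilde{k}_{l,i}-k_{l,i}$ as the sum of two parts: $f^{(L)}(t_{l,i},\tilde{y}_{l,i})-f(t_{l,i},\tilde{y}_{l,i})$, and $f(t_{l,i},\tilde{y}_{l,i})-f(t_{l,i},y_{l,i})$. Using the error model $\fl(x)=(I+\delta)x$ and the Lipschitz property of $f$, the first part is bounded by $\varepsilon(\|f(t_{l,i},\tilde y_{l,i})\|+L\|\tilde y_{l,i}\|)+L|t_{l,i}|\varepsilon$ up to higher-order terms in $\varepsilon$. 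This is $\le C\varepsilon$ as long as the iterates stay in a bounded set. Boundedness holds because $\|y_i-y(t_i)\|\le C_y\dt^p$ keeps $y_i$ bounded, and the induction below keeps $\tilde y_i$ close to $y_i$. By (\ref{E_y_li}), the second part is bounded by $L\big(\sum_j|\alpha_{l,j}|\|e_{i-j}\|+\dt\sum_{j<l}|\beta_{l,j}|\|\tilde{k}_{j,i}-k_{j,i}\|\big)$. Double-precision stages are handled the same way without the $\varepsilon$ term. Induction on $l$ then gives
\[
\|\tilde k_{l,i}-k_{l,i}\|\le A\max_{0\le j\le q}\|e_{i-j}\|+B\varepsilon ,
\]
with $A,B$ independent of $\dt$ for $\dt\le \dt_0$.

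\emph{Step 2: one-step recursion for MAD.} With $w_i=\tilde y_i$, subtracting (\ref{E_S1}) from (\ref{E_S1_L}) gives
\[
\|e_{i+1}\|\le \|e_i\|+\dt\sum_l|a_l|\,\|\tilde k_{l,i}-k_{l,i}\|\le (1+\dt C_1)E_i+\dt C_2\varepsilon ,
\]
where $E_i=\max_{j\le i}\|e_j\|$. The discrete Gronwall lemma, with $e_0=0$ (and equal starting values for multistep methods), yields $E_N\le C_2\varepsilon\,(e^{C_1(T-t_0)}-1)/C_1$, i.e.\ an $\mathcal O(\varepsilon)$ bound. To get the stated form $c_{1,j_0}\varepsilon+c_{2,j_0}\varepsilon\dt^p$, we track where $\|f\|$ and $\|\tilde y_{l,i}\|$ enter the constant. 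We replace $y_{l,i}$ by its value along the exact solution $y(t_i)$, which adds a discrepancy $\mathcal O(\dt^p)$ by the order assumption. This splits $C_2$ into a part depending only on $y(\cdot)$ (giving $c_{1,j_0}$) and a part multiplied by $C_y\dt^p$ (giving $c_{2,j_0}$).

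\emph{Step 3: MAS.} Now $w_i=\fl(\tilde y_i)$, so the recursion contains the extra term $\|\fl(\tilde y_i)-\tilde y_i\|\le\varepsilon\|\tilde y_i\|\le C\varepsilon$. This term is not multiplied by $\dt$, so it becomes $\dt\cdot(C\varepsilon/\dt)$. The Gronwall sum over $N\sim 1/\dt$ steps then produces the additional $c_3\varepsilon/\dt$, while the rest of the argument is unchanged.

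\textbf{Main obstacle.} The delicate point is the a priori boundedness of $\tilde y_i$ and of the arguments of $f$, which is needed so that the relative rounding errors become absolute $\mathcal O(\varepsilon)$ errors. I would handle it with a bootstrap induction: assume $E_i\le 1$, derive the bound, and check that it stays below $1$ for $\varepsilon$ small. In the MAS case this requires $\varepsilon/\dt$ small, which is exactly the regime in which the bound is meaningful.
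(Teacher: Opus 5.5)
Your argument is correct, with the $\varepsilon/h$ caveat you yourself flag for MAS, and it takes a genuinely different and more careful route than the paper's.

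\textbf{The paper's route.} The paper never uses Gronwall. In (\ref{Eq_K}) it writes $\|\tilde k_{l,i}-k_{l,i}\|=\|f^{(L)}(t_{l,i},y_{l,i})-f(t_{l,i},y_{l,i})\|$. That is, it evaluates each low-precision stage at the unperturbed argument $y_{l,i}$, and it takes the double-precision stages of the mixed scheme to be literally the $k_{l,i}$ of the reference method. It bounds this pure rounding error with the global quantities $\|f\|_\infty$, $\|y\|_\infty$ and $\|y_i\|\le C_y h^p+\|y\|_\infty$; the last of these is the only source of the $\varepsilon h^p$ term. It then simply telescopes $\|e_{i+1}\|\le\|e_i\|+hC_a(K_1\varepsilon+K_2\varepsilon h^p)$, which gives constants linear in $T-t_0$. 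For MAS it adds $\varepsilon\|\tilde y_i\|$ per step and sets $c_3=(T-t_0)\max_m\|\tilde y_m\|$.

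\textbf{What your route adds.} You keep the propagation term $f(t_{l,i},\tilde y_{l,i})-f(t_{l,i},y_{l,i})$. This term is really there, because by (\ref{E_S1_L}) $\tilde k_{l,i}$ is evaluated at $\sum_j\alpha_{l,j}\tilde y_{i-j}+h\sum_j\beta_{l,j}\tilde k_{j,i}$, not at $y_{l,i}$. It is of size $\|e\|$ rather than $\varepsilon$, so closing the estimate genuinely needs your discrete Gronwall step. The cost is constants of order $e^{C_1(T-t_0)}$. In exchange, your bound applies to the scheme as actually implemented, including double stages fed with perturbed arguments and multistep history. It also needs only local boundedness of $f$, not $\|f\|_\infty<\infty$.

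\textbf{The MAS constant.} Your bootstrap makes explicit what the paper hides in its $\tilde y$-dependent $c_3$. A relative perturbation of size $\varepsilon$ that is not multiplied by $h$ can compound like $(1+\varepsilon)^N$. So an $h$-independent $c_3$ requires $\varepsilon/h$ to be bounded. If you want an unconditional statement of the paper's type, bound $\varepsilon\|\tilde y_i\|\le\varepsilon\max_m\|\tilde y_m\|$ a posteriori, as the paper does.

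\textbf{Two simplifications.} First, because $f$ is globally Lipschitz, $\|f(t,z)\|\le\|f(t,0)\|+L\|z\|$ lets you absorb the $\varepsilon L\|e\|$ contributions into the Gronwall coefficient, so the MAD case needs no bootstrap at all. Second, splitting $C_2$ at the end of Step 2 is unnecessary: an $h$-independent $\mathcal{O}(\varepsilon)$ bound already has the stated form (take $c_{2,j_0}=0$).
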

   \begin{pf}
   \begin{enumerate}
\item

Since $f(t,y)$ is Lipschitz with constant $L$, and using the property of the model $\fl$, we get
$$
\begin{aligned}
\|f^{(L)}(t,z)-f(t,z)\|&=\| \fl(f(\fl(t),\fl(z)))-f(t,z)\|\\
&\le \| \fl(f(\fl(t),\fl(z)))-f(\fl(t),\fl(z))\| +\|f(\fl(t),\fl(z))-f(t,z)\|\\
&\le \varepsilon \|f(\fl(t),\fl(z))\|+L \left [ |t-\fl(t)|+\|z-\fl(z)\| \right ]\\
& \le  \varepsilon \|f(\fl(t),\fl(z))\|+\varepsilon L\left [ |t|+\|z\| \right ]\\
& \le  \varepsilon \left [\|f\|_{\infty}+L (|t|+\|z\|) \right ]\qquad \mbox{ for all }(t,z).
\end{aligned}
$$

Thus,
\begin{equation}\label{Eq_K}
\|\tilde{k}_{l,i}-{k}_{l,i}\|=\|f^{(L)}(t_{l,i},y_{l,i})-f(t_{l,i},y_{l,i})\| \le \varepsilon \left [\|f\|_{\infty}+L (|t_{l,i}|+\|y_{l,i}\|) \right ]\quad \forall i.
\end{equation}

Using the definition of $y_{l,i}$ given by $(\ref{E_y_li})$, we get :
$$
\begin{aligned}
\|y_{l,i}\|&\le \sum_{j=0}^{q} |\alpha_{l,j}| \|y_{i-j}\|+h\sum_{j=1}^{l-1} |\beta_{l,j}| \|k_{j,i}\|.
\end{aligned}
$$

Thus, as $$\|y_i\|=\|y_i-y(t_i)\|+\|y(t_i)\|\le C_y \dt^p+\|y\|_{\infty} \mbox{ and } \|k_{j,i}\| \le \|f\|_{\infty},$$ we get
\begin{equation}\label{EQ__Y_lj}
\|y_{l,i}\|\le \left [ C_y\sum_{j=0}^{q} |\alpha_{l,j}| \right  ] \dt^p+ \left [ \sum_{j=0}^{q} |\alpha_{l,j}| \right ] \|y\|_{\infty}+h\left [\sum_{j=1}^{l-1} |\beta_{l,j}|\right] \|f\|_{\infty}.
\end{equation}

Using the previous inequality, we get from (\ref{Eq_K})

$$
\begin{aligned}
\|\tilde{k}_{l,i}-{k}_{l,i}\| &\le \varepsilon\left [ (1 +L h \sum_{j=1}^{l-1} |\beta_{l,j}|)\|f\|_{\infty}+L T+L \|y\|_{\infty} \sum_{j=0}^{q} |\alpha_{l,j}|  \right ] +\left[ L C_y\sum_{j=0}^{q} |\alpha_{l,j}|\right ]\varepsilon \dt^p.
\end{aligned}
$$
Therefore
\begin{equation}\label{IN_Kl}
\|\tilde{k}_{l,i}-{k}_{l,i}\| \le K_{1,l}^{(h)}\varepsilon +K_{2,l}\varepsilon \dt^p,
\end{equation}
where
$$
\begin{aligned}
K_{1,l}^{(h)}&=(1 +L h \sum_{j=1}^{q-1} |\beta_{l,j}|)\|f\|_{\infty}+L T+L \|y\|_{\infty} \sum_{j=0}^{q} |\alpha_{l,j}|\le K_1,  \\
K_{2,l}&=L C_y\sum_{j=0}^{q} |\alpha_{l,j}|\le K_2,
\end{aligned}
$$
with
$$
\begin{aligned}
 K_1&=(1 +L (T-t_0) \displaystyle \sum_{j=1}^{q-1}{\displaystyle \max_{1\le l\le q} |\beta_{l,j}|})\|f\|_{\infty}+L T+L \|y\|_{\infty} {\displaystyle \max_{1\le l\le q} \sum_{j=0}^{q} |\alpha_{l,j}|}, \\
K_2&={\displaystyle \max_{1\le l\le q} K_{2,l}}
\end{aligned}
$$
Then, from (\ref{E_S1_L}) we have:
  \begin{eqnarray*}
 \qquad\tilde{y}_{i+1}&={\displaystyle \underbrace{{y}_{i}+h\displaystyle \sum_{l=1}^{q}a_{l}k_{l,i}}_{=y_{i+1}}}-({y}_{i}-\tilde{y}_{i})+h \displaystyle \sum_{\underset {{l\in \{l_{1},\ldots,l_{q-{j_0}}\} }}{l=1}}^{q}a_{l}(\tilde{k}_{l,i}-k_{l,i}),
 \end{eqnarray*}
 and therefore, we get
 $$
 \begin{aligned}
 \|y_{i+1}- \tilde{y}_{i+1}\|\le \|y_{i}- \tilde{y}_{i}\|+\dt \displaystyle \sum_{\underset {{l\in \{l_{1},\ldots,l_{q-{j_0}}\} }}{l=1}}^{q}|a_{l}| \|\tilde{k}_{l,i}-k_{l,i}\|.
 \end{aligned}
 $$
By induction, and since $y_0=\tilde{y}_0$, we get
 $$
 \begin{aligned}
 \|y_{i+1}- \tilde{y}_{i+1}\|\le \dt \displaystyle \sum_{m=0}^{i} \left ( \displaystyle \sum_{\underset {{l\in \{l_{1},\ldots,l_{q-j_0}\} }}{l=1}}^{q}|a_{l}| \|\tilde{k}_{l,m}-k_{l,m}\|\right ).
 \end{aligned}
 $$
 Using (\ref{IN_Kl}), we obtain
$$
 \begin{aligned}
 \|y_{i+1}- \tilde{y}_{i+1}\|&\le (i+1) \dt C_a  (K_1\varepsilon +K_2\varepsilon \dt^p)
 \end{aligned}
 $$
where $C_a= \displaystyle \sum_{\underset {{l\in \{l_{1},\ldots,l_{q-{j_0}}\} }}{l=1}}^{q}|a_{l}|.$  Since $(i+1)\dt \le Nh=T-t_0$, we get
 $$
 \begin{aligned}
 \|y_{i+1}- \tilde{y}_{i+1}\|&\le  (T-t_0) K_1 C_a\varepsilon +(T-t_0) K_2 C_a \varepsilon \dt^p\\
 &\le c_{1,j_0} \varepsilon +c_{2,j_0}\varepsilon \dt^p.
 \end{aligned}
 $$
 where
 $$
\begin{aligned}
c_{1,j_0}&=(T-t_0) K_1 C_a,  \ c_{2,j_0}&=(T-t_0) K_2C_a.
\end{aligned}
$$
The constant $K_1$, in the above inequality, could be replaced by $K_{1,l}^h.$ Since $h\mapsto K_1^h$ is increasing then as $h$ decreases $ K_{1,l}^h$ decreases, and therefore the error between the mixed solution and the double solution decreases.

\item
In this case we have :
$$
\tilde{y}_{i+1}=\fl(\tilde{y}_{i})+h \displaystyle \sum_{\underset {{l\notin \{l_{1},\ldots,l_{q-{j_0}}\} }}{l=1}}^{q}a_{l} {{k}_{l,i}}+h\displaystyle \sum_{\underset {{l\in \{l_{1},\ldots,l_{q-{j_0}}\} }}{l=1}}^{q}a_{l}\tilde{k}_{l,i}.
$$
Therefore, using the same technique as the previous part, we get
\begin{eqnarray*}
 \qquad\tilde{y}_{i+1}&={\displaystyle \underbrace{{y}_{i}+h\displaystyle \sum_{l=1}^{q}a_{l}k_{l,i}}_{=y_{i+1}}}-({y}_{i}-\fl(\tilde{y}_{i}))+h \displaystyle \sum_{\underset {{l\in \{l_{1},\ldots,l_{q-{j_0}}\} }}{l=1}}^{q}a_{l}(\tilde{k}_{l,i}-k_{l,i}),
 \end{eqnarray*}
As
$$
\|{y}_{i}-\fl(\tilde{y}_{i})\| \le \|{y}_{i}-\tilde{y}_{i}\|+\varepsilon \|\tilde{y}_i\|,
$$
then we get
$$
 \begin{aligned}
 \|y_{i+1}- \tilde{y}_{i+1}\|\le \|y_{i}- \tilde{y}_{i}\|+\dt \displaystyle \sum_{\underset {{l\in \{l_{1},\ldots,l_{q-{j_0}}\} }}{l=1}}^{q}|a_{l}| \|\tilde{k}_{l,i}-k_{l,i}\|+{\varepsilon \|\tilde{y}_i\|}.
 \end{aligned}
 $$
By induction, we get
 $$
 \begin{aligned}
 \|y_{i+1}- \tilde{y}_{i+1}\|\le \dt \displaystyle \sum_{m=0}^{i} \left ( \displaystyle \sum_{\underset {{l\in \{l_{1},\ldots,l_{q-{j_0}}\} }}{l=1}}^{q}|a_{l}| \|\tilde{k}_{l,m}-k_{l,m}\|\right )+\varepsilon \displaystyle \sum_{m=0}^{i}{\|\tilde{y}_m\|}.
 \end{aligned}
 $$

As
$$
 \begin{aligned}
\varepsilon \displaystyle \sum_{m=0}^{i}{\|\tilde{y}_m\| } &\le (i+1)\varepsilon \max_{0\le m\le N}\|\tilde{y}_m\| \\
&\le {N\varepsilon}{\max_{0\le m\le N}\|\tilde{y}_m\| }\\
&\le  {\frac{T-t_0}{\dt}\varepsilon}{\max_{0\le m\le N}\|\tilde{y}_m\|}
 \end{aligned}
$$

Using the previous inequality and the part $1,$ we get
$$
 \begin{aligned}
 \displaystyle\max_{0\le i \le N}\|{y}_{_{i}}-\tilde{y}_{_{i}}\| \le c_{1,j_0} \varepsilon +c_{2,j_0} \varepsilon \dt^p +c_3 {\frac{\varepsilon}{\dt}},
 \end{aligned}
$$
where the constant $c_3$ is given by
$$(T-t_0){\displaystyle  \max_{0\le m\le N}\|\tilde{y}_m\|}.~\qquad ~\blacksquare$$
\end{enumerate}
   \end{pf}
   \begin{rmk}\label{Remark_1}
   This theorem emphasizes the importance of performing the accumulation in equation~(\ref{E_S1_L}) in double precision (as in $\MIXEDDjD$). If the accumulation is instead carried out in single precision (as in $\MIXEDSINGLE$), even when all other computations are performed in double precision, the error between the double precision and mixed precision solutions is of order $\frac{\varepsilon}{h}$ rather than $\varepsilon$.

Consequently, as the time step $h$ decreases and tends to zero, the error between the double precision solution and each solution produced by $\MIXEDDjD$ decreases, whereas the error between the double precision solution and each solution produced by $\MIXEDSINGLE$ solution increases without bound. This clearly demonstrates that the $\MIXEDDjD$ methods provide superior accuracy compared with the $\MIXEDSINGLE$ methods.

Furthermore, the constants $c_{1,j}$ and $c_{2,j}$ appearing in the theorem involve the sum $\displaystyle\sum_{\underset{l \in \{l_{1},\ldots,l_{q-j}\}}{l=1}}^{q} |a_l|.$ Reducing the number of stages  $j$  computed in double precision increases the value of this sum, and hence enlarges the constants $c_{1,j}$ and $c_{2,j}$, thereby slightly degrading the accuracy of the mixed precision solution.

   \end{rmk}
   \subsection{Numerical results}
   This section presents a numerical assessment of the efficiency of the mixed precision methods MAD(j), $j=0,\ldots,q$, applied to  RK4, RK2, AB1, and AB2, running in both sequential and parallel, for solving the benchmark problems introduced in Section~\ref{Sec:benchmarks}.

\subsubsection{Numerical results of parallel numerical methods}
This section shows the numerical results of the parallel numerical methods.\\

{\bf Benchmark $1$} \\

We present an analysis of the numerical results obtained for Benchmark~1.
The results corresponding to the timesteps $12 \times 10^{-4}$,
$4 \times 10^{-4}$, $12 \times 10^{-5}$, and $12 \times 10^{-6}$
are displayed in
Figures~\ref{BEN1__TS_12M4__WPD_DOUBLE},
\ref{BEN1__TS_4M4__WPD_DOUBLE},
\ref{BEN1__TS_12M5__WPD_DOUBLE}, and
\ref{BEN1__TS_12M6__WPD_DOUBLE}, respectively.

For the smallest timestep $12 \times 10^{-6}$, the computational cost becomes
significant; therefore, the number of processors was increased to~1008.
With this timestep, we only present the numerical results for AB1 and AB2 methods, as these methods are less expensive than the others since
they only require one RHS evaluation at each integration step.

In all Figures \ref{BEN1__TS_12M4__WPD_DOUBLE}, \ref{BEN1__TS_4M4__WPD_DOUBLE}, \ref{BEN1__TS_12M5__WPD_DOUBLE}, \ref{BEN1__TS_12M6__WPD_DOUBLE}, \ref{BEN2__WPD_DOUBLE}, \ref{BEN1__SEQ_DOUBLE}, the horizontal axis represents the runtime speedup measured
with respect to the fully double precision corresponding numerical method, while the vertical
axis shows the relative error computed with respect to the double precision
 solution. This representation provides a direct visualization of the
trade-off between computational efficiency and numerical accuracy.

First, we are going to explain the numerical results for RK4. For the timestep $12 \times 10^{-4}$ (Figure~\ref{BEN1__TS_12M4__WPD_DOUBLE}),
the solution computed by each mixed method  MAD($j$) is a good approximation of the solution of the DOUBLE method.
The best mixed method is MAD(0), as it achieves a runtime comparable
to that of the fully SINGLE method and is up to $2.2$ times faster
than the fully DOUBLE method, while maintaining high accuracy.
In contrast, the fully SINGLE method exhibits a significant loss of
accuracy. The relative error associated with the MAD($j$) solutions remains
of order $10^{-6}$, whereas the MAS variants yield errors of order
$5 \times 10^{-3}$. This substantial gap highlights the importance of
performing the accumulation step in double precision.

For the timesteps $4 \times 10^{-4}$ and $12 \times 10^{-5}$ (Figures~\ref{BEN1__TS_4M4__WPD_DOUBLE} and \ref{BEN1__TS_12M5__WPD_DOUBLE}),
the relative error of the MAD($j$) solutions slightly decreases,
while the errors of both the fully SINGLE and MAS solutions increase.
These observations are consistent with the theoretical results established
in Theorem~1. When we reduce more the timestep to  $12 \times 10^{-6}$ (Figure~\ref{BEN1__TS_12M6__WPD_DOUBLE}), the relative error of the MAD($j$)
solutions is of order $10^{-6}$, whereas the error of the  SINGLE and MAS solutions is approximately $2.5$. This confirms that further reduction of the timestep
leads to the behavior predicted by Theorem~1, that is the divergence of the SINGLE solution from the DOUBLE solution. However, the MAD(0) mixed precision method and the SINGLE method
achieve essentially identical runtime performance, both being up to
$2.5$ times faster than the fully DOUBLE method, while their accuracy differs markedly.

We have the same conclusions for the numerical methods AB1, AB2 and RK2.
 In all numerical methods, the SINGLE and MAS methods of RK4, RK2, AB1, and AB2 provide poor approximations of the
double precision solution. This loss of accuracy results from performing
the accumulation of the numerical method in single precision, causing rounding errors
to accumulate at each timestep.

To further illustrate the efficiency of our mixed methods MAD, even for very small time steps, Table \ref{RES_Euler_Small} reports numerical results obtained when the AB1 method is used for solving Benchmark~1 on the interval $[0,T_f]$, where $T_f$ is taken to be very small. Let $T_{M_1}$ and $y_{M_1}$ (respectively $T_{M_2}$ and $y_{M_2}$) be the runtime and the solution of the mixed method $MAD(0)$ (respectively $MAS(0)$). Table \ref{RES_Euler_Small}  clearly shows that, as the time step $h$ decreases, the relative error of the solution $y_{M_1}$ decreases. This indicates that, even for very small time steps, this mixed solution remains a very good approximation of the double solution and that our mixed method $MAD(0)$ is up to $2.5$ times faster than the
DOUBLE method.

\begin{table}[htbp]
\begin{center}
\begin{tabular}{|c|c|c|c|c|c|c|c|}
 \hline
 $\dt$ &$T_f$&$\frac{T_D}{T_S}$&$\frac{T_D}{T_{M_1}}$&$\frac{T_D}{T_{M_2}}$&$\left \|\frac{y_{_{D}}-y_{_{M_1}}}{y_{_{D}}}\right \|_{\infty}$&$\left \|\frac{y_{_{D}}-y_{_{M_2}}}{y_{_{D}}}\right \|_{\infty}$&$\left \|\frac{y_{_{D}}-y_{_{S}}}{y_{_{D}}}\right \|_{\infty}$ \\ \hline
 $10^{-6}$ &$1$&$2.6$&$2.2$&$0.99$&$5.1\times 10^{-9}$&$7.9\times 10^{-2}$&$7.9\times 10^{-2}$ \\ \hline
 $10^{-8}$ &$10^{-2}$&2.5&$2.2$&$0.93$&$2.6\times 10^{-9}$&$5.5\times 10^{-2}$ &$5.5\times 10^{-2}$ \\ \hline
  $3.3\times 10^{-9}$ &$10^{-2}$&$2.6$&$2.3$&$0.95$&$2.6\times 10^{-9}$&$1.5\times 10^{-1}$&$1.5\times 10^{-1}$ \\ \hline
 $10^{-9}$&$10^{-4}$&$2.5$&$2.2$&$0.99$&$1.6\times 10^{-9}$&$5.8\times 10^{-3}$&$5.8\times 10^{-3}$\\ \hline
  $10^{-10}$&$10^{-4}$&$2.5$&$2.4$&$0.99$&$1.6\times 10^{-9}$&$5.5\times 10^{-2}$&$5.5\times 10^{-2}$\\ \hline
\end{tabular}
\end{center}
 \caption{Explicit Euler method} \label{RES_Euler_Small}
 \end{table}

The above numerical results show that performing the accumulation in double precision is essential to preserve numerical stability and accuracy. These observations are fully consistent with Theorem~1, which predicts the loss of convergence when the entire computation is carried out in single precision.

Overall, the mixed precision strategy successfully accelerates
double precision computations without compromising numerical reliability.
In particular, the solution of the  MAD($j$) method maintains the quality of the
double precision solution independently of the time integration method.
Among all tested configurations, MAD(0) provides the best balance
between runtime and accuracy: all stage evaluations are performed in
single precision, whereas the solution accumulation is carried out in
double precision, thereby reducing computational cost while preserving
stability and accuracy.

\begin{figure}[htbp]
\centering
\includegraphics[scale=0.5]{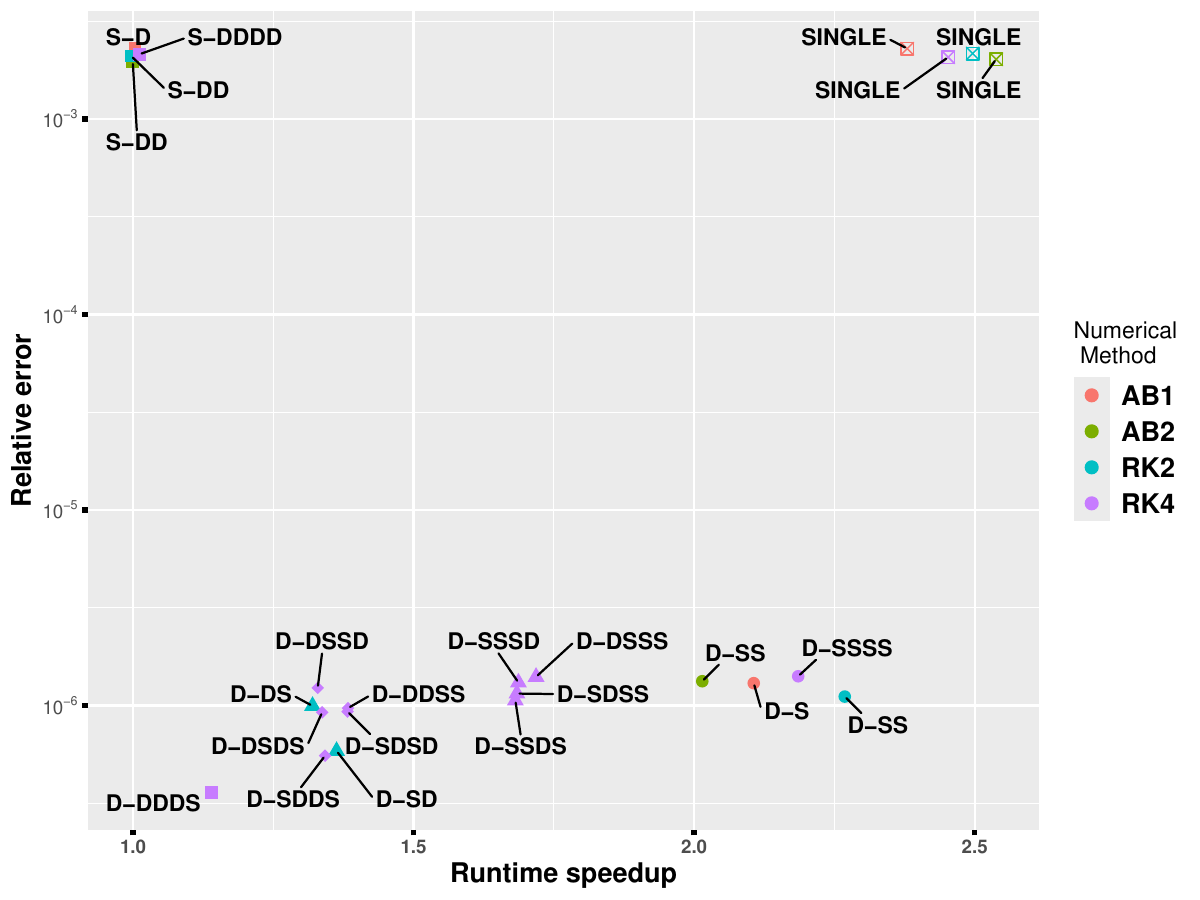}
\caption{"Work Precision Diagrams" (WPD) for benchmark $1$, running in parallel with MPI, with the timestep $12\times 10^{-4}.$ The runtime speedup against accuracy.}\label{BEN1__TS_12M4__WPD_DOUBLE}
\end{figure}

\begin{figure}[htbp]
\centering
\includegraphics[scale=0.5]{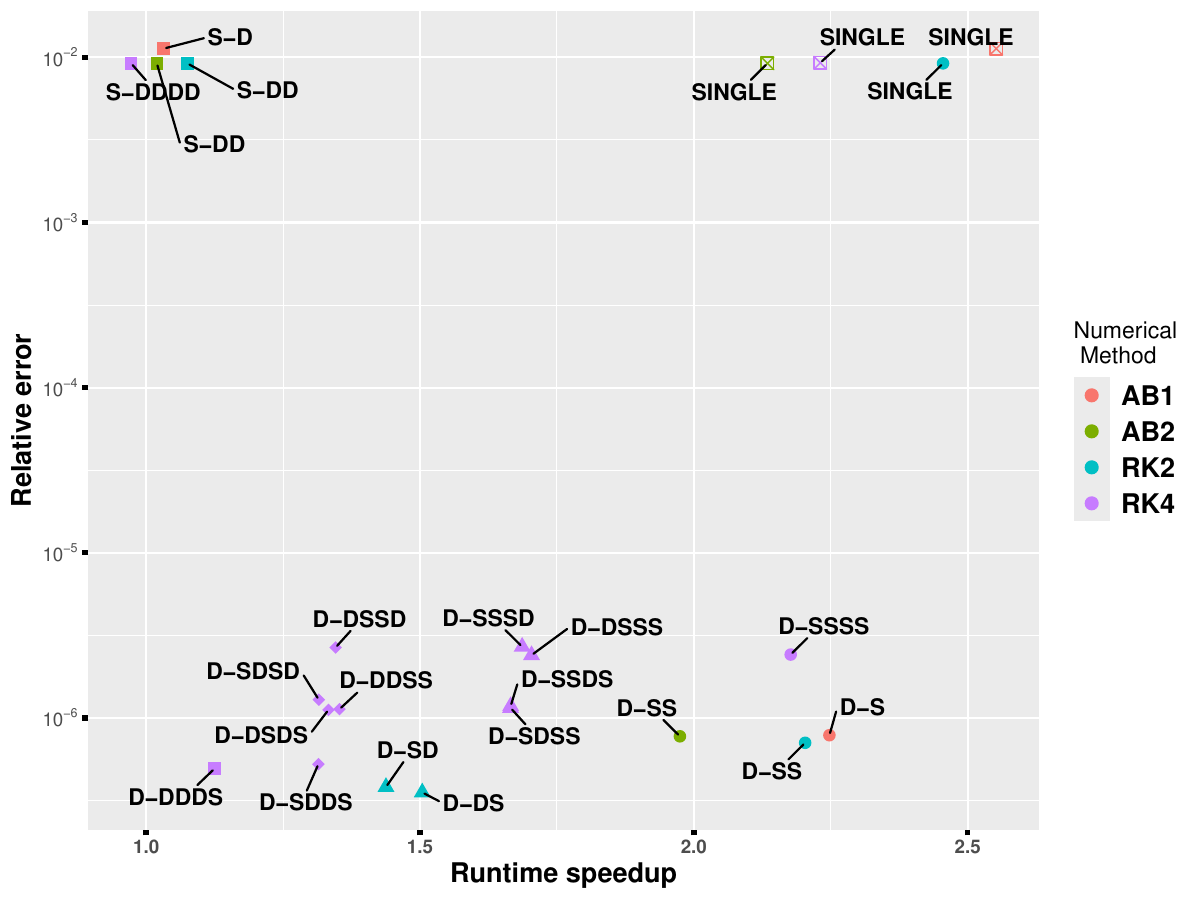}
\caption{"Work Precision Diagrams" (WPD) for benchmark $1$, running in parallel with MPI, with the timestep $4\times 10^{-4}.$ The runtime speedup against accuracy.}\label{BEN1__TS_4M4__WPD_DOUBLE}
\end{figure}

\begin{figure}[htbp]
\centering
\includegraphics[scale=0.5]{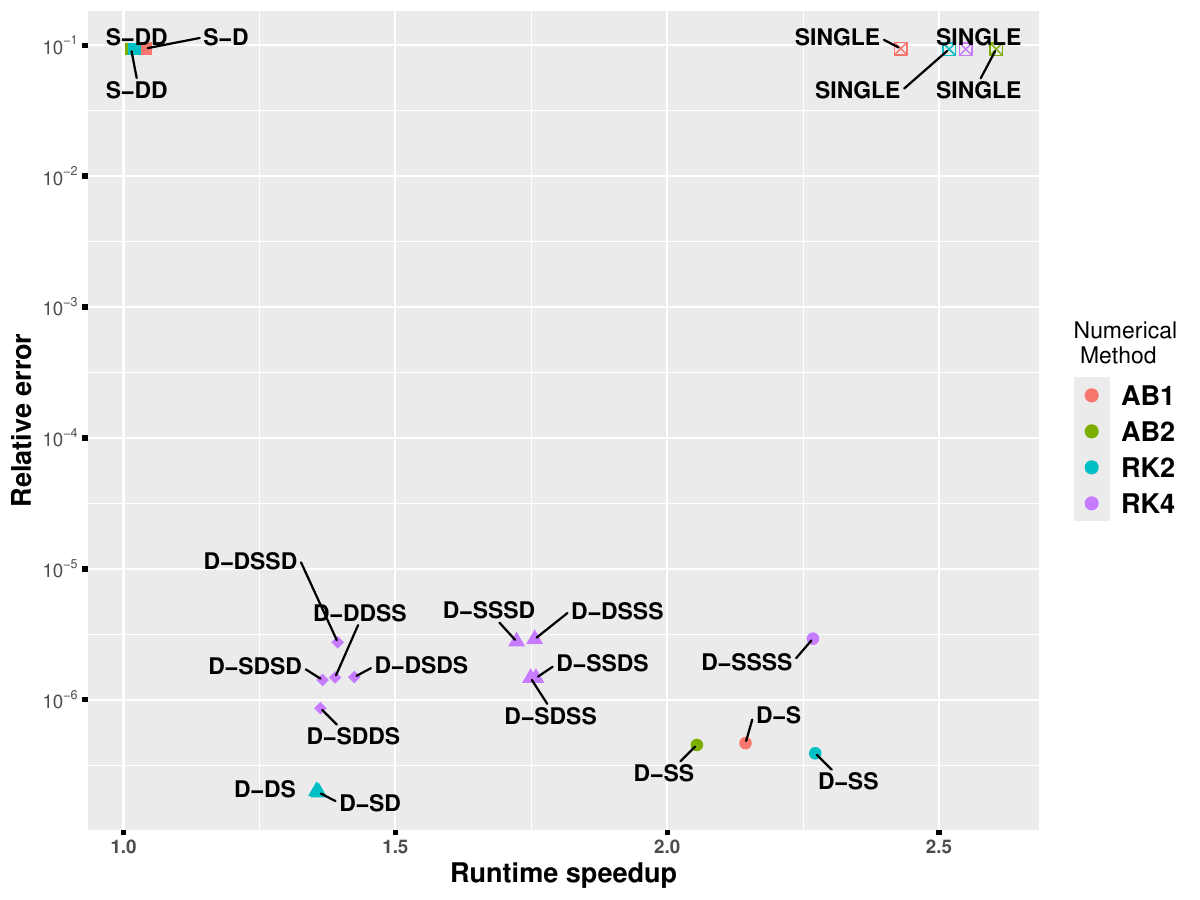}
\caption{"Work Precision Diagrams" (WPD) for benchmark $1$, running in parallel with MPI, with the timestep $12\times 10^{-5}.$ The runtime speedup against accuracy.}\label{BEN1__TS_12M5__WPD_DOUBLE}
\end{figure}

\begin{figure}[htbp]
\centering
\includegraphics[scale=0.5]{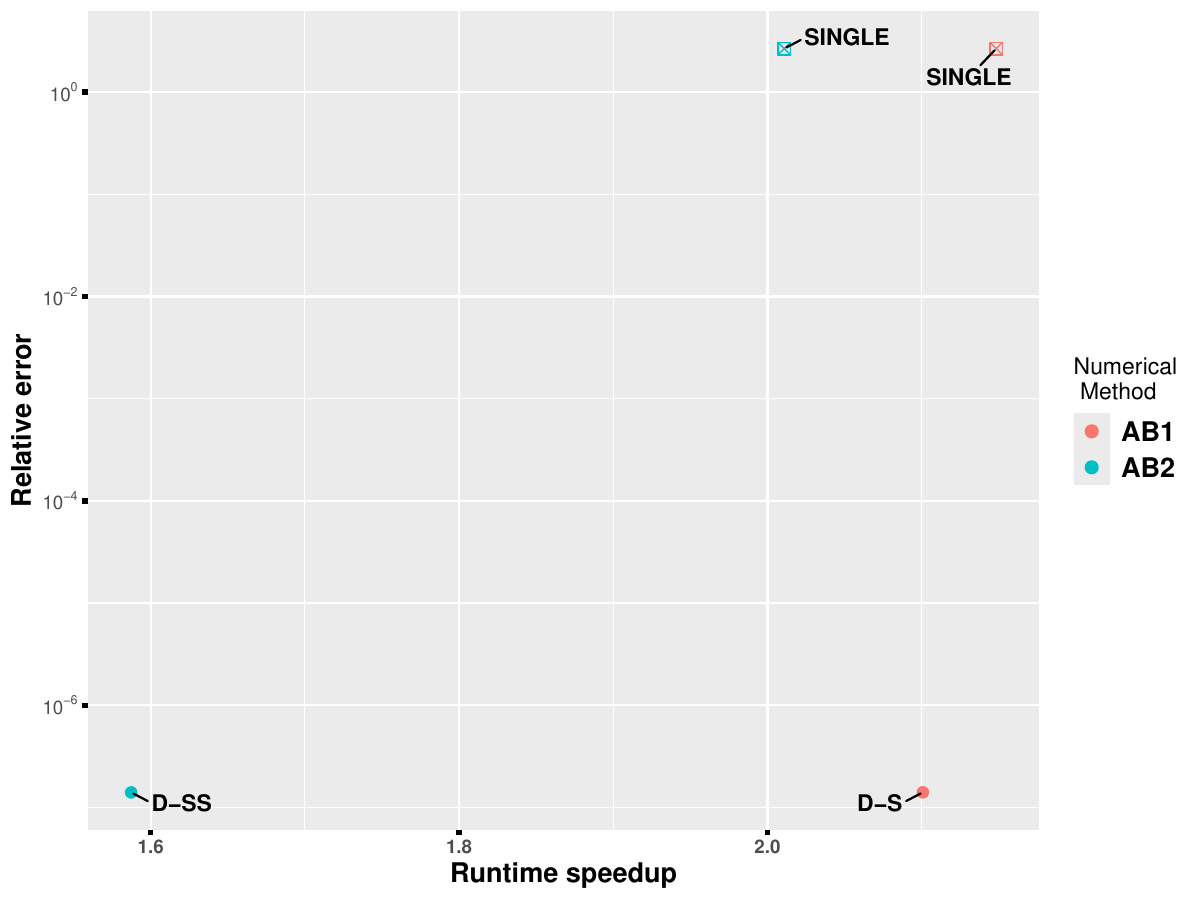}
\caption{"Work Precision Diagrams" (WPD) for benchmark $1$, running in parallel with MPI, with the timestep $12\times 10^{-6}.$ The runtime speedup against accuracy.}\label{BEN1__TS_12M6__WPD_DOUBLE}
\end{figure}

{\bf Benchmark $2$} \\

The numerical results obtained for Benchmark~2 are reported in
Figure~\ref{BEN2__WPD_DOUBLE}. Similarly to Benchmark~1, this figure
illustrates the trade-off between runtime speedup and the relative error.

We observe that all mixed precision variants, as well as the fully
SINGLE implementation, achieve speedups of up to a factor of $2$
compared to the DOUBLE method. However, the accuracy behavior differs
significantly between the mixed strategies. In particular, the relative
errors associated with the MAD($j$) methods remain considerably smaller
than those obtained with the MAS method. This confirms that performing
the accumulation step in double precision effectively controls the
propagation of rounding errors, even when stage evaluations are carried
out in single precision.

As in Benchmark~1, the MAS method leads to noticeably larger errors
due to the full single precision accumulation of the numerical method,
which amplifies rounding effects over successive timesteps. In contrast,
the MAD($j$) methods preserve a level of accuracy comparable to the
DOUBLE method while still providing substantial performance gains.

Overall, the numerical experiments for Benchmark~2 reinforce the
conclusions drawn from Benchmark~1: the MAD(0) configuration offers the
best compromise between computational efficiency and numerical accuracy.
By computing all stage evaluations in single precision and accumulating
the solution in double precision, MAD(0) achieves significant runtime
acceleration without degrading the quality of the DOUBLE solution.

\begin{figure}[htbp]
\centering
\includegraphics[scale=0.5]{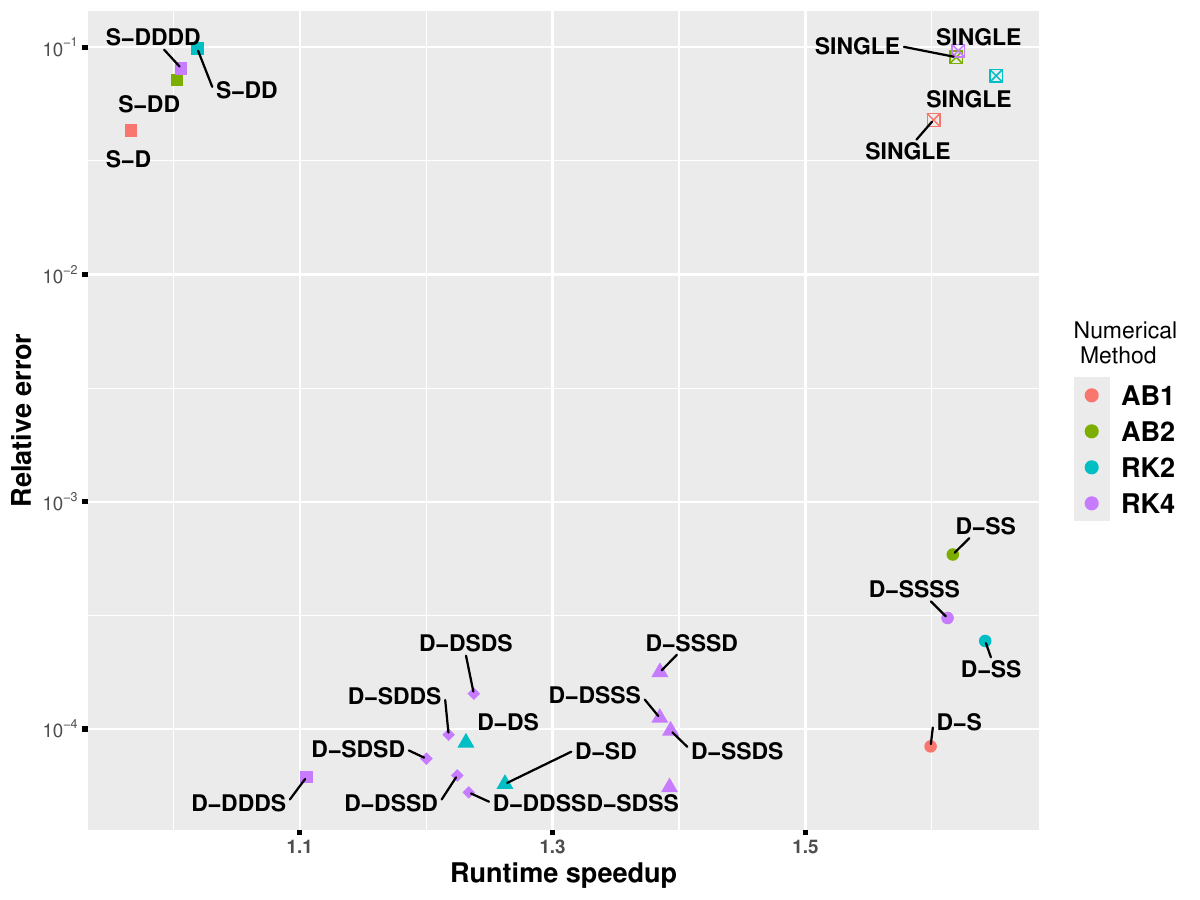}
\caption{"Work Precision Diagrams" (WPD) for benchmark $2$, running in parallel with MPI,  with the timestep $h=10^{-3}$. The runtime speedup against "double" accuracy.}\label{BEN2__WPD_DOUBLE}
\end{figure}

\subsubsection{Numerical results of sequential numerical methods}
In order to assess whether the MPI-based parallelization affects the numerical quality of the proposed mixed methods.
Figure~\ref{BEN1__SEQ_DOUBLE} reports the numerical results obtained for Benchmark~1, when the numerical methods are coded in sequential mode, with the timestep of $12 \times 10^{-4}$.

Because the code is executed sequentially, the problem size was reduced to $10^{4}$. For the original size of $10^{5}$,
the runtime, in sequential mode, becomes prohibitively large and does not allow for a practical comparison. This reduction ensures that the
sequential experiments remain computationally feasible while preserving the qualitative behavior of the numerical solutions.

The results displayed in Figure~\ref{BEN1__SEQ_DOUBLE} clearly indicate  that the mixed precision methods achieve speedups of up to $2.2$
relative to the fully double precision implementation, while maintaining a high level of accuracy. The relative error remains consistent with that observed in the parallel experiments.

These observations show that the MPI parallelization does not alter the numerical behavior of the mixed precision approach.
In particular, the performance gains obtained with the MAD($j$) method stem from the arithmetic precision design rather than from
parallel effects. This confirms that the effectiveness of the proposed mixed precision methodology is independent of the execution mode,
whether sequential or parallel.

\begin{figure}[htbp]
\centering
\includegraphics[scale=0.5]{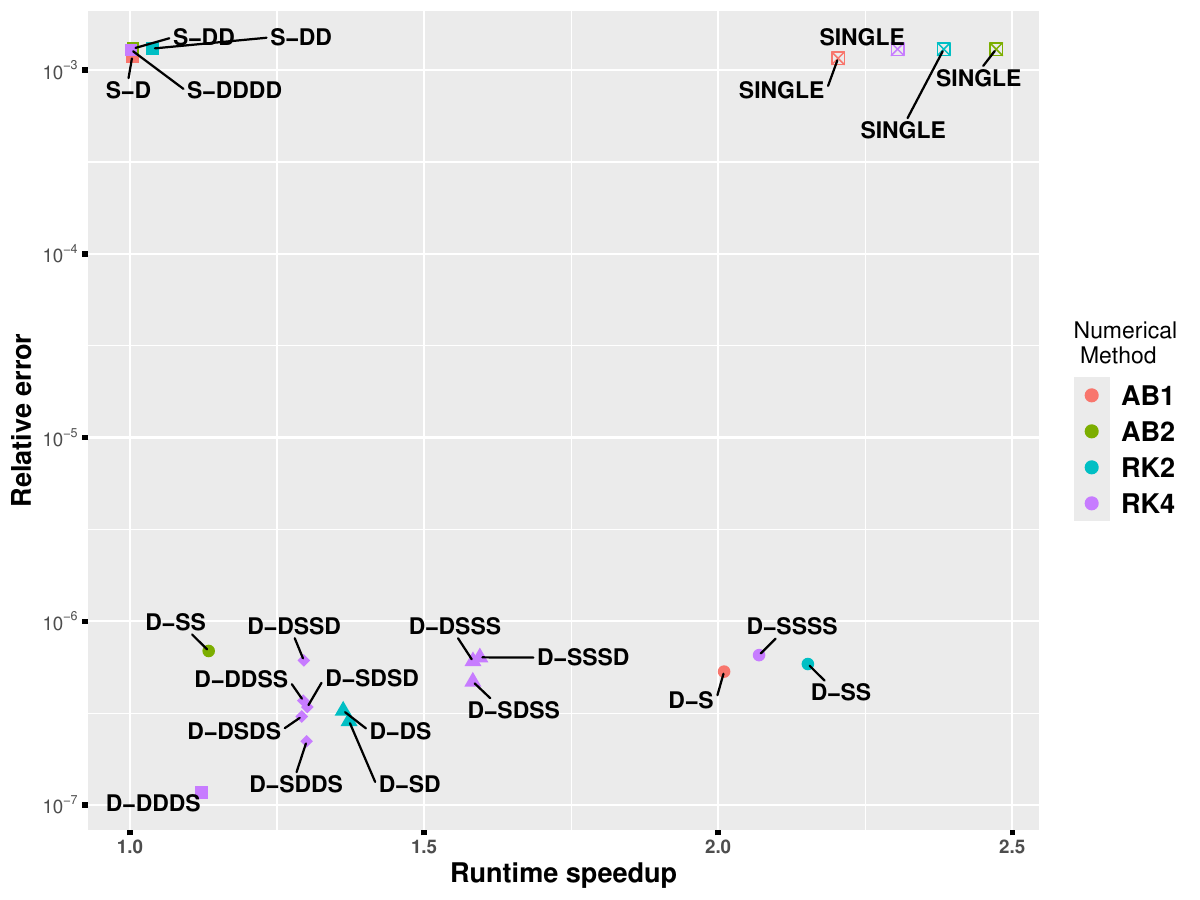}
\caption{Numerical results for benchmark $1$, with size $10^{4}$, running in sequential with the timestep $12\times 10^{-4}.$ The runtime speedup against accuracy.}
\label{BEN1__SEQ_DOUBLE}
\end{figure}
\begin{rmk}
In this paper, our objective is not to design a general mixed precision framework with iterative refinement, but a lightweight strategy tailored to explicit ODE solvers arising in biological models. For this class of problems, double precision accumulation is sufficient to preserve the required accuracy without an additional refinement step.  The double precision accumulation is the key ingredient. This is in fact the main message of the paper: most computations can be safely performed in single precision, provided that sensitive reductions and updates are accumulated in double precision.
The memory traffic plays an important role in the runtime speedup. The observed speedups stem from both the higher FP32 compute throughput and the reduced memory footprint, which improves bandwidth utilization and cache efficiency. Speedups exceeding $2\times$ are explained by the combined effects of reduced memory traffic, improved cache behavior, and the architectural imbalance between FP32 and FP64 throughput. In our experiments, we did not observe additional stability restrictions when using mixed precision compared to full double precision. The main effect was on the error magnitude rather than stability.
\end{rmk}

\section{Conclusions}
\label{Sec:Con}
The purpose of this paper is to show how lowering the arithmetic precisions, in some evaluations of the right-hand side (RHS) of the ODEs, within explicit numerical methods coded in either sequential or parallel, could accelerate the computation of the solution of the ODEs without impacting its accuracy. We provided theoretical results highlighting the efficiency of performing some portions of the numerical methods in a lower precision. These  results are
validated by numerical tests on two large ODEs that model biological systems. The numerical tests show that lowering the arithmetic precision can accelerate computations by up to $2.5$ times compared to the full double precision method, all while preserving the same high level of accuracy. Our, theoretical and numerical, results indicate that the most effective mixed method occurs when all stages of the numerical method are performed in single precision, while the final result being accumulated in double precision.  While our numerical tests primarily focused on large systems of ODEs, our mixed methods can also be applied to smaller systems that require either a large number of integration steps or when these systems are running in sequential (on a Laptop), as well as to scenarios where many small systems are solved simultaneously. In our numerical tests, we only used two kind of precision—single and double—but our mixed methods can also accommodate other types of precisions, such as half and quadruple. Using half precision in the mixed methods instead of single precision can be up to $4$ times faster than the numerical methods running in double precision, and $8$ times faster than the numerical method running in quadruple precision while maintaining high accuracy.
In our future work, we are interested in using mixed precision methods on GPU clusters to evaluate the transition from CPU to GPU, with a focus on the performance and portability of these methods.

\section*{Data availability}
The codes and data utilized in the numerical tests can be found at the following link:

 https://doi.org/10.6084/m9.figshare.27290751.v1

\section*{Acknowledgments}
We would like to acknowledge the assistance of volunteers in putting
together this example manuscript and supplement. Experiments presented in this paper were carried out using the Grid'5000 testbed, supported by a scientific interest group hosted by Inria and including CNRS, RENATER and several Universities as well as other organizations (see \url{https://www.grid5000.fr}).

\bibliographystyle{cas-model2-names}
\bibliography{references}

\begin{thebibliography}{29}
\expandafter\ifx\csname natexlab\endcsname\relax\def\natexlab#1{#1}\fi
\providecommand{\url}[1]{\texttt{#1}}
\providecommand{\href}[2]{#2}
\providecommand{\path}[1]{#1}
\providecommand{\DOIprefix}{doi:}
\providecommand{\ArXivprefix}{arXiv:}
\providecommand{\URLprefix}{URL: }
\providecommand{\Pubmedprefix}{pmid:}
\providecommand{\doi}[1]{\href{http://dx.doi.org/#1}{\path{#1}}}
\providecommand{\Pubmed}[1]{\href{pmid:#1}{\path{#1}}}
\providecommand{\bibinfo}[2]{#2}
\ifx\xfnm\relax \def\xfnm[#1]{\unskip,\space#1}\fi
%Type = Article
\bibitem[{Abdelfattah et~al.(2021)Abdelfattah, Anzt, Boman, Carson, Cojean,
  Dongarra, Fox, Gates, Higham, Li, Loe, Luszczek, Pranesh, Rajamanickam,
  Ribizel, Smith, Swirydowicz, Thomas, Tomov, Tsai and Yang}]{AA2021}
\bibinfo{author}{Abdelfattah, A.}, \bibinfo{author}{Anzt, H.},
  \bibinfo{author}{Boman, E.G.}, \bibinfo{author}{Carson, E.},
  \bibinfo{author}{Cojean, T.}, \bibinfo{author}{Dongarra, J.},
  \bibinfo{author}{Fox, A.}, \bibinfo{author}{Gates, M.},
  \bibinfo{author}{Higham, N.J.}, \bibinfo{author}{Li, X.S.},
  \bibinfo{author}{Loe, J.}, \bibinfo{author}{Luszczek, P.},
  \bibinfo{author}{Pranesh, S.}, \bibinfo{author}{Rajamanickam, S.},
  \bibinfo{author}{Ribizel, T.}, \bibinfo{author}{Smith, B.F.},
  \bibinfo{author}{Swirydowicz, K.}, \bibinfo{author}{Thomas, S.},
  \bibinfo{author}{Tomov, S.}, \bibinfo{author}{Tsai, Y.M.},
  \bibinfo{author}{Yang, U.M.}, \bibinfo{year}{2021}.
\newblock \bibinfo{title}{A survey of numerical linear algebra methods
  utilizing mixed-precision arithmetic}.
\newblock \bibinfo{journal}{Int. J. High Perform. Comput. Appl.}
  \bibinfo{volume}{35}, \bibinfo{pages}{344–369}.
%Type = Article
\bibitem[{Abdulah et~al.(2020)Abdulah, Ltaief, Sun, Genton and
  Keyes}]{ALSGK2019}
\bibinfo{author}{Abdulah, S.}, \bibinfo{author}{Ltaief, H.},
  \bibinfo{author}{Sun, Y.}, \bibinfo{author}{Genton, M.G.},
  \bibinfo{author}{Keyes, D.E.}, \bibinfo{year}{2020}.
\newblock \bibinfo{title}{Geostatistical modeling and prediction using
  mixed-precision tile cholesky factorization.}
\newblock \bibinfo{journal}{CoRR} \bibinfo{volume}{abs/2003.05324}.
%Type = Article
\bibitem[{Ackmann et~al.(2022)Ackmann, Dueben, Palmer and
  Smolarkiewicz}]{ADP2022}
\bibinfo{author}{Ackmann, J.}, \bibinfo{author}{Dueben, P.D.},
  \bibinfo{author}{Palmer, T.N.}, \bibinfo{author}{Smolarkiewicz, P.K.},
  \bibinfo{year}{2022}.
\newblock \bibinfo{title}{Mixed-precision for linear solvers in global
  geophysical flows}.
\newblock \bibinfo{journal}{Advances in Modeling Earth Systems} .
%Type = Article
\bibitem[{Al-Sayed-Ali et~al.(2025)Al-Sayed-Ali, Bernard, Marzorati and
  Rouzaud-Cornabas}]{ABMC2025}
\bibinfo{author}{Al-Sayed-Ali, M.}, \bibinfo{author}{Bernard, S.},
  \bibinfo{author}{Marzorati, A.}, \bibinfo{author}{Rouzaud-Cornabas, J.},
  \bibinfo{year}{2025}.
\newblock \bibinfo{title}{Mixed precision implicit numerical schemes for
  systems of ordinary differential equations}.
\newblock \bibinfo{journal}{Numerical Algorithms} .
%Type = Article
\bibitem[{Balos et~al.(2023)Balos, Roberts and Gardner}]{BRG2023}
\bibinfo{author}{Balos, C.J.}, \bibinfo{author}{Roberts, S.},
  \bibinfo{author}{Gardner, D.J.}, \bibinfo{year}{2023}.
\newblock \bibinfo{title}{Leveraging mixed precision in exponential time
  integration methods}.
\newblock \bibinfo{journal}{2023 IEEE High Performance Extreme Computing
  Conference (HPEC)} , \bibinfo{pages}{1--8}.
%Type = Article
\bibitem[{Burnett et~al.(2021)Burnett, Gottlieb, Grant and
  Heryudono}]{BGGH2021}
\bibinfo{author}{Burnett, B.}, \bibinfo{author}{Gottlieb, S.},
  \bibinfo{author}{Grant, Z.}, \bibinfo{author}{Heryudono, A.},
  \bibinfo{year}{2021}.
\newblock \bibinfo{title}{Performance evaluation of mixed-precision runge-kutta
  methods}.
\newblock \bibinfo{journal}{IEEE High Performance Extreme Computing Conference
  (HPEC)} , \bibinfo{pages}{1--6}.
%Type = Article
\bibitem[{Butcher(1996)}]{B1996}
\bibinfo{author}{Butcher, J.C.}, \bibinfo{year}{1996}.
\newblock \bibinfo{title}{A history of runge-kutta methods}.
\newblock \bibinfo{journal}{Applied Numerical Mathematics}
  \bibinfo{volume}{20}, \bibinfo{pages}{247--260}.
%Type = Article
\bibitem[{Butcher(2003)}]{B2003}
\bibinfo{author}{Butcher, J.C.}, \bibinfo{year}{2003}.
\newblock \bibinfo{title}{Numerical methods for ordinary differential
  equations}.
\newblock \bibinfo{journal}{John Wiley} .
%Type = Article
\bibitem[{Das et~al.(2018)Das, Mellempudi, Mudigere, Kalamkar, Avancha,
  Banerjee, Sridharan, Vaidyanathan, Kaul, Georganas, Heinecke, Dubey, Corbal,
  Shustrov, Dubtsov, Fomenko and Pirogov}]{DMMK2018}
\bibinfo{author}{Das, D.}, \bibinfo{author}{Mellempudi, N.},
  \bibinfo{author}{Mudigere, D.}, \bibinfo{author}{Kalamkar, D.D.},
  \bibinfo{author}{Avancha, S.}, \bibinfo{author}{Banerjee, K.},
  \bibinfo{author}{Sridharan, S.}, \bibinfo{author}{Vaidyanathan, K.},
  \bibinfo{author}{Kaul, B.}, \bibinfo{author}{Georganas, E.},
  \bibinfo{author}{Heinecke, A.}, \bibinfo{author}{Dubey, P.},
  \bibinfo{author}{Corbal, J.}, \bibinfo{author}{Shustrov, N.},
  \bibinfo{author}{Dubtsov, R.}, \bibinfo{author}{Fomenko, E.},
  \bibinfo{author}{Pirogov, V.O.}, \bibinfo{year}{2018}.
\newblock \bibinfo{title}{Mixed precision training of convolutional neural
  networks using integer operations}.
\newblock \bibinfo{journal}{CoRR} \bibinfo{volume}{abs/1802.00930}.
%Type = Article
\bibitem[{Düben et~al.(2017)Düben, Subramanian, Dawson and Palmer}]{DSDP2017}
\bibinfo{author}{Düben, P.}, \bibinfo{author}{Subramanian, A.},
  \bibinfo{author}{Dawson, A.}, \bibinfo{author}{Palmer, T.},
  \bibinfo{year}{2017}.
\newblock \bibinfo{title}{A study of reduced numerical precision to make
  superparameterization more competitive using a hardware emulator in the
  openifs model}.
\newblock \bibinfo{journal}{Journal of Advances in Modeling Earth Systems} .
%Type = Article
\bibitem[{Düben and Palmer(2020)}]{KDP2020}
\bibinfo{author}{Düben, P.D.}, \bibinfo{author}{Palmer, T.N.},
  \bibinfo{year}{2020}.
\newblock \bibinfo{title}{Number formats, error mitigation, and scope for
  16‐bit arithmetics in weather and climate modeling analyzed with a shallow
  water model}.
\newblock \bibinfo{journal}{Journal of Advances in Modeling Earth Systems}
  \bibinfo{volume}{12}.
%Type = Article
\bibitem[{El~Cheikh et~al.(2017)El~Cheikh, Bernard and El~Khatib}]{EBE2017}
\bibinfo{author}{El~Cheikh, R.}, \bibinfo{author}{Bernard, S.},
  \bibinfo{author}{El~Khatib, N.}, \bibinfo{year}{2017}.
\newblock \bibinfo{title}{{A multiscale modelling approach for the regulation
  of the cell cycle by the circadian clock}}.
\newblock \bibinfo{journal}{{Journal of Theoretical Biology}}
  \bibinfo{volume}{426}, \bibinfo{pages}{117--125}.
\newblock \URLprefix \url{https://hal.science/hal-01561617}.
%Type = Article
\bibitem[{El~Cheikh~R(2014)}]{EBE2014}
\bibinfo{author}{El~Cheikh~R, Bernard~S, E.K.N.}, \bibinfo{year}{2014}.
\newblock \bibinfo{title}{Modeling circadian clock-cell cycle interaction
  effects on cell population growth rates}.
\newblock \bibinfo{journal}{J Theor Biol.} .
%Type = Article
\bibitem[{Grant(2022)}]{G2022}
\bibinfo{author}{Grant, Z.J.}, \bibinfo{year}{2022}.
\newblock \bibinfo{title}{Perturbed runge-kutta methods for mixed precision
  applications}.
\newblock \bibinfo{journal}{J Sci Comput} \bibinfo{volume}{6}.
%Type = Article
\bibitem[{Hairer et~al.(2008)Hairer, McLachlan and Razakarivony}]{HMR2008}
\bibinfo{author}{Hairer, E.}, \bibinfo{author}{McLachlan, R.I.},
  \bibinfo{author}{Razakarivony, A.}, \bibinfo{year}{2008}.
\newblock \bibinfo{title}{Achieving brouwer’s law with implicit runge–kutta
  methods}.
\newblock \bibinfo{journal}{BIT} \bibinfo{volume}{48},
  \bibinfo{pages}{231–243}.
%Type = Book
\bibitem[{Hairer et~al.(2000)Hairer, N{\o}rsett and Wanner}]{HNW87}
\bibinfo{author}{Hairer, E.}, \bibinfo{author}{N{\o}rsett, S.},
  \bibinfo{author}{Wanner, G.}, \bibinfo{year}{2000}.
\newblock \bibinfo{title}{Solving Ordinary Differential Equations {I} Nonstiff
  problems}.
\newblock \bibinfo{edition}{Second} ed., \bibinfo{publisher}{Springer},
  \bibinfo{address}{Berlin}.
%Type = Book
\bibitem[{Hairer and Wanner(1996)}]{HW96}
\bibinfo{author}{Hairer, E.}, \bibinfo{author}{Wanner, G.},
  \bibinfo{year}{1996}.
\newblock \bibinfo{title}{Solving Ordinary Differential Equations II. Stiff and
  Differential-Algebraic Problems}. volume~\bibinfo{volume}{14}.
\newblock \bibinfo{publisher}{Springer Verlag Series in Comput. Math.}
%Type = Article
\bibitem[{Higham and Mary(2022)}]{HM2022}
\bibinfo{author}{Higham, N.J.}, \bibinfo{author}{Mary, T.},
  \bibinfo{year}{2022}.
\newblock \bibinfo{title}{Mixed precision algorithms in numerical linear
  algebra}.
\newblock \bibinfo{journal}{Acta Numerica} \bibinfo{volume}{31},
  \bibinfo{pages}{347–414}.
%Type = Article
\bibitem[{van~der Houwen and de~Swart(1997)}]{VS97}
\bibinfo{author}{van~der Houwen, P.J.}, \bibinfo{author}{de~Swart, J.J.B.},
  \bibinfo{year}{1997}.
\newblock \bibinfo{title}{Triangularly implicit iteration methods for ode-ivp
  solvers}.
\newblock \bibinfo{journal}{SIAM Journal on Scientific Computing}
  \bibinfo{volume}{18}, \bibinfo{pages}{41--55}.
%Type = Article
\bibitem[{Kelley(2022)}]{K2022}
\bibinfo{author}{Kelley, C.T.}, \bibinfo{year}{2022}.
\newblock \bibinfo{title}{Newton's method in mixed precision}.
\newblock \bibinfo{journal}{SIAM Review} \bibinfo{volume}{64},
  \bibinfo{pages}{191--211}.
%Type = Article
\bibitem[{Klower et~al.(2022)Klower, Hatfield, Croci, Duben and
  Palmer}]{KDP2021}
\bibinfo{author}{Klower, M.}, \bibinfo{author}{Hatfield, S.},
  \bibinfo{author}{Croci, M.}, \bibinfo{author}{Duben, P.},
  \bibinfo{author}{Palmer, T.}, \bibinfo{year}{2022}.
\newblock \bibinfo{title}{Fluid simulations accelerated with 16 bits:
  approaching 4x speedup on a64fx by squeezing shallowwaters.jl into float16}.
\newblock \bibinfo{journal}{Journal of Advances in Modeling Earth Systems}
  \bibinfo{volume}{14}.
%Type = Article
\bibitem[{Lima and Buckwar(2015)}]{LB2015}
\bibinfo{author}{Lima, P.M.}, \bibinfo{author}{Buckwar, E.},
  \bibinfo{year}{2015}.
\newblock \bibinfo{title}{Numerical solution of the neural field equation in
  the two-dimensional case}.
\newblock \bibinfo{journal}{SIAM Journal on Scientific Computing}
  \bibinfo{volume}{37}, \bibinfo{pages}{B962--B979}.
\newblock \DOIprefix\doi{10.1137/15M1022562}.
%Type = Article
\bibitem[{M.~Croci(2022)}]{CS2022}
\bibinfo{author}{M.~Croci, G.R.S.}, \bibinfo{year}{2022}.
\newblock \bibinfo{title}{Mixed-precision explicit stabilized runge–kutta
  methods for single- and multi-scale differential equations}.
\newblock \bibinfo{journal}{Journal of Computational Physics}
  \bibinfo{volume}{464}, \bibinfo{pages}{111349}.
%Type = Article
\bibitem[{Mellempudi et~al.(2019)Mellempudi, Srinivasan, Das and
  Kaul}]{MSDK2019}
\bibinfo{author}{Mellempudi, N.}, \bibinfo{author}{Srinivasan, S.},
  \bibinfo{author}{Das, D.}, \bibinfo{author}{Kaul, B.}, \bibinfo{year}{2019}.
\newblock \bibinfo{title}{Mixed precision training with 8-bit floating point.}
\newblock \bibinfo{journal}{CoRR} \bibinfo{volume}{abs/1905.12334}.
%Type = Article
\bibitem[{Paxton et~al.(2022)Paxton, Chantry, Klöwer, Saffin and
  Palmer}]{PCKSP2022}
\bibinfo{author}{Paxton, E.A.}, \bibinfo{author}{Chantry, M.},
  \bibinfo{author}{Klöwer, M.}, \bibinfo{author}{Saffin, L.},
  \bibinfo{author}{Palmer, T.}, \bibinfo{year}{2022}.
\newblock \bibinfo{title}{Climate modeling in low precision: Effects of both
  deterministic and stochastic rounding}.
\newblock \bibinfo{journal}{Journal of Climate} \bibinfo{volume}{35},
  \bibinfo{pages}{1215 -- 1229}.
%Type = Article
\bibitem[{S.(1977)}]{A1977}
\bibinfo{author}{S., A.}, \bibinfo{year}{1977}.
\newblock \bibinfo{title}{Dynamics of pattern formation in lateral-inhibition
  type neural fields}.
\newblock \bibinfo{journal}{Biological Cybernetics} .
%Type = Article
\bibitem[{Váňa et~al.(2017)Váňa, Düben, Lang, Palmer, Leutbecher, Salmond
  and Carver}]{VDLSC2017}
\bibinfo{author}{Váňa, F.}, \bibinfo{author}{Düben, P.},
  \bibinfo{author}{Lang, S.}, \bibinfo{author}{Palmer, T.},
  \bibinfo{author}{Leutbecher, M.}, \bibinfo{author}{Salmond, D.},
  \bibinfo{author}{Carver, G.}, \bibinfo{year}{2017}.
\newblock \bibinfo{title}{Single precision in weather forecasting models: An
  evaluation with the ifs}.
\newblock \bibinfo{journal}{Monthly Weather Review} \bibinfo{volume}{145},
  \bibinfo{pages}{495 -- 502}.
\newblock \DOIprefix\doi{10.1175/MWR-D-16-0228.1}.
%Type = Article
\bibitem[{Wilson and Cowan(1972)}]{WC1972}
\bibinfo{author}{Wilson, H.R.}, \bibinfo{author}{Cowan, J.D.},
  \bibinfo{year}{1972}.
\newblock \bibinfo{title}{Excitatory and inhibitory interactions in localized
  populations of model neurons}.
\newblock \bibinfo{journal}{Biophysical Journal} \bibinfo{volume}{12},
  \bibinfo{pages}{1--24}.
%Type = Article
\bibitem[{Yingqi et~al.(2022)Yingqi, Takeshi, Linjie and Takeshi}]{ZFZI2022}
\bibinfo{author}{Yingqi, Z.}, \bibinfo{author}{Takeshi, F.},
  \bibinfo{author}{Linjie, Z.}, \bibinfo{author}{Takeshi, I.},
  \bibinfo{year}{2022}.
\newblock \bibinfo{title}{Numerical investigation into the mixed precision
  gmres(m) method using fp64 and fp32}.
\newblock \bibinfo{journal}{Journal of Information Processing}
  \bibinfo{volume}{30}.

\end{thebibliography}

\vspace{-0.5 cm}

\section[here]{Appendix $1$}\nonumber

\vspace{-0.1 cm}

\begin{longtable}{|p{0.2\linewidth}p{0.1\linewidth}p{0.5\linewidth}|}
%\begin{center}
%\begin{tabular}{|c|c|c|}
\hline\mbox{Parameter}&  \mbox{Value}&\mbox{Unit}
\\ \hline
$k_s$&	  	0.1	&		unitless \\
$\eta$	&	0.01	&			per cell \\
$k_d$	&	0.0	&		$h^{-1}$ \\
$C$	&	0.4	&		nM\\
$b_{bmal0}$	&	0.0	&		nM\\
$p_0$	&	4		&	 \\
$\nu_{1b}$	&        9.0		&		nM $h^{-1}$  \\
$k_{1b}$	&	1.0		&		nM \\
$k_{1d}$	&	0.12		&		$h^{-1}$  \\
$k_{1i}$	&	0.56		&		nM \\
$k_{2b}$	&	0.3		&		$nM^-1$ $h^{-1}$  \\
$k_{2d}$	&	0.05		&		$h^{-1}$  \\
$k_{2t}$	&	0.24		&		$h^{-1}$  \\
$k_{3t}$	&	0.02		&		$h^{-1}$  \\
$q$	&	2		&		unitless \\
$k_{3d}$	&	0.12		&		$h^{-1}$  \\
$\nu_{4b}$	&	3.6		&		$nM^-1$ $h^{-1}$  \\
$r_0$	&	3		&		unitless \\
$k_{4b}$	&	2.16		&		$nM^-1$ $h^{-1}$  \\
$k_{4d}$	&	0.75		&		$h^{-1}$  \\
$k_{5b}$	&	0.24		&		$h^{-1}$  \\
$k_{5d}$	&	0.06		&		$h^{-1}$  \\
$k_{5t}$	&	0.45		&		$h^{-1}$  \\
$k_{6t}$	&	0.06		&		$h^{-1}$  \\
$k_{6d}$	&	0.12		&		$h^{-1}$  \\
$k_{6a}$	&	0.09		&		$h^{-1}$  \\
$k_{7a}$	&	0.003		&		$h^{-1}$  \\
$k_{7d}$	&	0.09		&		$h^{-1}$  \\
$\tau_{0}$	&	1.0		&		unitless\\
std$\_$circadian$\_$clock &	0.05	&		unitless \\
$c$		&	0.01	&		nM \\
$k_{impf}$		&	4.0	&		$h^{-1}$ \\
$k_{0mpf}$		&	6	&		$h^{-1}$ \\
$k_{1mpf}$		&	0.05	&		nM \\
$s$		&	20.0	&		nM \\
$d_{wee1}$		&	5.0	&		$h^{-1}$ \\
$n_0$		&	2	&		unitless \\
$k_{actw}$		&	1.0	&		$h^{-1}$ \\
$d_{w1}$		&	1.0	&		nM \\
$c_w$		&	0.5	&		nM \\
$k_{inactw}$		&	200.0	&		$h^{-1}$ \\
$k_{1wee1}$		&	0.5	&		nM \\
$d_{w2}$		&	1.0	&		$h^{-1}$ \\
$k_{act}$		&	0.01	&		$h^{-1}$ \\
target$\_$period	&	20.0	&		intrinsic period of the cell cycle in hours \\
std$\_$cell$\_$cycle	&	0.1	&		relative variability of the cell cycle periods \\ \hline
%\end{tabular}
\caption{The parameters for benchmark $1$}\label{Para_Cira}
%\end{center}
\end{longtable}
The initial solution $y_0$ is given by :
$$
\begin{aligned}
&y_1^{(i)}(0) = 0.1 \times \mbox{rand}^{(i)}\\
&y_2^{(i)}(0) =  0.2 	\times \mbox{rand}^{(i)}\\
&y_3^{(i)}(0) =  1.8 	\times \mbox{rand}^{(i)}\\
&y_4^{(i)}(0) =  0.4 	\times \mbox{rand}^{(i)}\\
&y_5^{(i)}(0) =  0.5 	\times \mbox{rand}^{(i)} \\
&y_6^{(i)}(0) =  0.6 	\times \mbox{rand}^{(i)}\\
&y_7^{(i)}(0) =  0.1 	\times \mbox{rand}^{(i)}\\
&y_8^{(i)}(0) =  0.1 	\times \mbox{rand}^{(i)}\\
&y_9^{(i)}(0) =  0.1 	\times \mbox{rand}^{(i)}\\
&y_{10}^{(i)}(0) =  0.1 	\times \mbox{rand}^{(i)}
\end{aligned}
$$
for $i=1,\ldots,d.$ Parameters $\lambda_0, \lambda$ and $\tau$ are given by
$$
\begin{aligned}
&\lambda_0 = 97.4/target\_period \\
&\lambda = \mbox{rand}(\lambda_0,std\_cell\_cycle\times\lambda_0), \\
&\tau = \mbox{rand}(\tau_0,std\_circadian\_clock\times\tau_0).
\end{aligned}
$$

The vector rand is computed by the subroutine "$call\  random\_number(\mbox{rand})$."

\end{document}